\date{}
\title{Differential algebras with Banach-algebra
coefficients II:\\ The operator
cross-ratio tau-function \\ and the Schwarzian derivative}
\author{
Maurice J. Dupr\'e \\ Department of Mathematics, Tulane
University\\ New Orleans, LA 70118 USA\\ mdupre@tulane.edu \\ \\
James  F. Glazebrook
\\(Primary Inst.)\\ Department of Mathematics and Computer
Science \\
 Eastern Illinois University \\
600  Lincoln Ave., Charleston, IL 61920--3099 USA \\
jfglazebrook@eiu.edu
\\ (Adjunct Faculty)
\\ Department of Mathematics \\ University of Illinois at
Urbana--Champaign\\ Urbana, IL 61801, USA
\\
\\ Emma Previato\thanks{Partial research support under grant
NSF-DMS-0808708 is very gratefully acknowledged.}
\\ Department of Mathematics and Statistics, Boston University
\\Boston, MA 02215--2411, USA \\
ep@math.bu.edu }
\theoremstyle{plain}
\newtheorem{lemma}{Lemma}[section]
\newtheorem{proposition}{Proposition}[section]
\newtheorem{theorem}{Theorem}[section]
\theoremstyle{definition}
\newtheorem{definition}{Definition}[section]
\newtheorem{remark}{Remark}[section]
\numberwithin{equation}{section}
\newcommand{\Det}{{\rm Det}}
\newcommand{\Sim}{{\rm Sim}}
\newcommand{\End}{{\rm End}}
\newcommand{\Ext}{{\rm Ext}}
\newcommand{\Gr}{{\rm Gr}}
\newcommand{\Hol}{{\rm Hol}}
\newcommand{\ID}{{\rm Id}}
\newcommand{\KP}{{\rm KP}}
\newcommand{\res}{{\rm res}}
\newcommand{\sa}{{\rm sa}}
\newcommand{\Spec}{{\rm Spec}}
\renewcommand{\a}{\alpha}
\newcommand{\be}{\beta}
\newcommand{\A}{\boldsymbol{\mathcal{A}}}
\newcommand{\cH}{\mathcal H}
\newcommand{\cL}{\mathcal L}
\newcommand{\T}{\mathcal T}
\newcommand{\bA}{\mathbb{A}}
\newcommand{\bC}{\mathbb{C}}
\newcommand{\hp}{\mathsf{H}_{+}}
\newcommand{\hm}{\mathsf{H}_{-}}
\newcommand{\kp}{\mathsf{K}_{+}}
\newcommand{\km}{\mathsf{K}_{-}}
\newcommand{\hpm}{\mathsf{H}_{\pm}}
\newcommand{\hmp}{\mathsf{H}_{\mp}}
\newcommand{\kpm}{\mathsf{K}_{\pm}}
\newcommand{\kmp}{\mathsf{K}_{\mp}}
\newcommand{\lra}{\longrightarrow}
\newcommand{\ovsetl}[1]{\overset {#1}{\lra}}
\newcommand{\wti}{\widetilde}
\newcommand{\uA}{\underline{\mathbb{A}}}
\newcommand{\cross}{\mathsf{cr}}
\newcommand{\del}{\partial}
\newcommand{\pol}{\mathfrak{P}}
\newcommand{\Schw}{\mathcal{S}}
\newcommand{\med}{\medbreak}
\begin{document}

\maketitle

\begin{abstract}
Several features of an analytic (infinite-dimensional) Grassmannian
of (commensurable) subspaces of a Hilbert space were developed in the context
of integrable PDEs (KP hierarchy). We extended some of those
features  when polarized
separable Hilbert spaces are generalized to a class of polarized
Hilbert modules, in particular the Baker and $\tau$-functions,
which become operator-valued.
Following from Part I we produce a pre-determinant
structure for a class of $\tau$-functions defined in the setting
of the similarity class of projections of a certain Banach *-algebra.
This structure is explicitly derived from the transition map
of a corresponding principal bundle.
The determinant of this map
 gives a generalized, operator-valued $\tau$-function
that takes values in a commutative C*-algebra. We extend to this setting
the operator cross-ratio which had been used to produce the
scalar-valued $\tau$-function, as well
as the associated notion of a Schwarzian
derivative along curves inside the space of similarity classes.
We link directly this cross-ratio with Fay's
trisecant identity for the $\tau$-function (equivalent to the KP
hierarchy).
By restriction to the image of the Krichever map, we use the
Schwarzian to introduce the notion of
operator-valued projective structure on a compact Riemann
surface:
 this allows a deformation inside the Grassmannian
(as it varies its complex structure). Lastly,
we use our identification of the Jacobian of the Riemann surface
in terms of extensions of the Burchnall-Chaundy C*-algebra
(Part I) to describe the KP hierarchy.

\end{abstract}

\med
\textbf{Mathematics Subject Classification (2010)}: 46L08,
53B10, 53C30, 14H70

\med
\textbf{Keywords}: Hilbert module, polarization, tau-function,
projective structure, cross-ratio, Schwarzian derivative, KP hierarchy, Fay trisecant identity.

\section{Introduction}

This paper continues from Part I \cite{DGP2} of this work where we focused on extensions
of the algebra of continuous functions $C(X)$ by compact operators,
classified via
the Calkin algebra using Brown-Douglas-Fillmore extension
theory and KK-theory to obtain results on the K-homology
of $X$ and its Jacobian.
In this Part II, we turn our attention to the connection
on the determinant bundle over the Grassmannian,
 whose curvature is related to Sato's $\tau$-function
in the scalar case, as well as in our
operator-coefficient case \cite{DGP3}, and tie these in with several concepts
developed in Part I in relationship to the KP equation. Accordingly, this Part II, as for Part I, continues
an operator-theoretic approach to the Sato-Segal-Wilson theory \cite{Sato,SW}.

We recall in \S\ref{multflow}
that the classical $\tau$-function can be realized as a
specific cocycle  of the determinant line bundle of
the universal bundle pulled back over the space of restricted
polarizations from the restricted Grassmannian and its dual
\cite{Sato,SW}. The specific cocycle is determined by two
non-vanishing sections. The
underlying cocycle for the universal bundle is used to arrive at
the \emph{pre-determinant} structure of the $\tau$-function which we
refer to as the $\mathfrak T$-\emph{function}; accordingly, its determinant
$\Det(\mathfrak T)$ is the $\tau$-function (see
\cite{DGP3,MSW, Zelikin}). The important structure lies
within the geometry of this determinant bundle, its connection and
its curvature.  But the $\mathfrak T$-function is the cocycle for the
universal bundle over a space of restricted polarizations $\pol$,
relating essentially the same two underlying sections, in this case, of
the associated principal bundle.
Hence the interest is in the
calculation of the geometry, connection, and curvature of the
principal bundle of the universal bundle using the two sections
which are each covariantly constant over two complementary
subbundles of the tangent bundle of the space of restricted
polarizations \cite{DGP3,Zelikin}.

In \cite{DGP1,DGP2} we considered a certain (complex) Banach *-algebra
$A$ modeled on the linear operators of a Hilbert module denoted
$H_{\A}$, where $\A$ is a commutative separable C*-algebra.
Letting $P(A)$ denote the idempotents in $A$, in \cite{DGP3}
we considered the geometry of the space
$\Lambda = \Sim(p,A)$, the similarity class of $p\in P(A)$,
which is closely related
to the Grassmanian $\Gr(p,A)$ of Part I \cite{DGP2}
(see also \cite{DGP1,DGP3}).
 From the transition map
of a principal bundle $V_{\Lambda} \lra \Lambda$, we
 deduced a corresponding pre-determinant
denoted $\T$. We identify in \S\ref{multflow} a relationship between $\T$ and
 $\mathfrak T$
obtained via a diffeomorphism between $\Lambda$ and $\pol$.
In fact, as shown in \cite{DGP3}, certain calculations involving the
connection and curvature
are more straightforward when performed on
$\Lambda$, and that is why we choose to work mainly with predeterminants
 $\T_{\lambda}$ and the
ensuing determinants $\tau_{\lambda}$.

We revisit the construction of the $\tau$-function in terms of a cross-ratio
(\S\ref{opcrossratio}),  and of the Schwarzian derivative that
preserves it (\S\ref{cross-ratio}).
 In doing so, we need to extend to the C*-algebra case
an (abelian) group action on the Grassmannian (\S\ref{group}),
study the pull-back of the universal bundle through it,
compare this to the Poincar\'e bundle \cite{Alvarez}
(\S\ref{poincare}), and relate the operator cross-ratio
 to the projective structure on the Riemann surface associated
to the Schwarzian derivative (\S\ref{proj-struct}).
We then switch (\S\ref{wick})
to Raina's interpretation of the KP hierarchy
(Fay's trisecant identity) in model quantum-field theory \cite{Raina}:
 he reduced the identity to Wick's theorem,
by writing the 4-point function in terms of theta functions and prime forms,
essentially  a (generalized) cross-ratio
on a Riemann surface.
As a result, we  are able to give in \S\ref{k-dv} a KP hierarchy
satisfied on the space of extensions of the Burchnall-Chaundy C* algebra.

We retain the notation of Part I and refer there to the relevant concepts and results obtained.
The Appendix to Part I recalls much of the background material which is used here. The new results obtained
in this Part II are Lemma \ref{gamma-action}, Propositions \ref{T-diagram}, \ref{Det-diagram}, \ref{2-forms}, \ref{lambda-analytic},
\ref{proj-prop}, Theorems \ref{main-2}, \ref{main-1}.


\section{Universal bundles with connection and the $\T$-function}\label{previous}

\subsection{The space of polarizations}\label{polarizations}

Let $A$ be a unital complex Banach(able) algebra with group of units $G(A)$ and space
of idempotents $P(A)$.
Let $H$ be a separable (infinite dimensional) Hilbert space (here as
in Part I we take
$H = L^2(S^{1}, \bC)$). Given a unital separable C*-algebra $\A$, we take the
standard (free countable dimensional) Hilbert module $H_{\A}$ over
$\A$ and consider a \emph{polarization} of $H_{\A}$ given by a pair of
 submodules
$(\hp, \hm)$, such that
\begin{equation}\label{polar1}
H_{\A} = \hp \oplus \hm ~,~\text{and}~ \hp \cap \hm = \{0\}.
\end{equation}
Recall also from \cite{DGP1,DGP2} the (restricted) Banach *-algebra $A =
\cL_J(H_{\A})$
($J$ being a unitary $\A$--module map  satisfying
$J^2 = 1$)
which henceforth we use.
As in \cite{DGP2}, we assume $\A$ to be commutative (and separable). The Gelfand transform
implies there exists a compact
metric space $Y$ such that $Y = \Spec(\A)$ and $\A \cong C(Y)$, and thus
we note that
\begin{equation}\label{aform}
A \cong \{\text{continuous functions}~ Y \lra B
\} = C(Y,B),
\end{equation}
where $B = \cL_J(H)$ corresponds to taking $\A= \bC$.

We recall the Grassmannian $\Gr(p,A)$ and refer to \cite[A.2]{DGP2} for the necessary background.
For a given $p \in P(A)$, there is associated to $\Gr(p,A)$ its
\emph{dual Grassmannian} $\Gr^{*}(p,A)$ \cite[\S3]{DGP3}. Let $\pol$
denote the space of polarizations $(\hp, \hm)$ on $H_{\A}$. Then as shown in
\cite[\S3]{DGP3}, the space $\pol$ can be regarded as a subspace
\begin{equation}
\pol \subset \Gr(p, A) \times \Gr^*(p, A).
\end{equation}
A significant observation is that $\pol$ can be
closely related to the similarity class $\Lambda = \Sim(p,A)$ of $A$
where $\Lambda$ consists of the elements of $\pol$ expressed in terms of
projections.
In fact, $\Lambda$ admits a natural complex analytic structure induced
from that of $A$ via $P(A)$ (see \cite{DG2,DGP3} and Proposition \ref{lambda-analytic} below).
Further, from \cite[Theorem 4.1(3)]{DGP3} there exists
an analytic diffeomorphism
\begin{equation}\label{diffeo-1}
{\phi} : \pol \lra \Lambda \subset P(A).
\end{equation}
\begin{remark}\label{sa-remark-1}
Given that $\Lambda$, as a set of idempotents in the Banach *-algebra $A$, can admit non self-adjoint
elements (see \cite[Remark 2.1]{DGP2}), we note that
it will not be diffeomorphic to $\Gr(p,A)$
under the natural quotient map of equivalence relations $\Pi : P(A) \lra \Gr(A)= P(A)/{\sim}$ . However, restriction of $\Pi$  to the
self-adjoint elements $\Lambda^{\sa}$ of $\Lambda$ can be shown to
establish a bijective
diffeomorphism between $\Lambda^{\sa}$ and $\Gr(p,A)$, and moreover,
there exists a
smooth retraction of $P(A)$ onto $P^{\sa}(A)$ (as can be shown using the technique of \cite[Proof of Proposition 4.6.2]{Bl}).
\end{remark}

\subsection{The universal bundle with connection over $\Lambda$}

Here we briefly describe part of the basic geometry of \cite{DGP3}.
Firstly, let $\pi_{\Lambda} =\Pi \vert \Lambda$ and
$\pi_V=\Pi\vert V(p,A)$.
Let $V_{\Lambda}=\pi_{\Lambda}^*(V(p,A))$; specifically,
\begin{equation}
V_{\Lambda}= \{(r,u) \in {\Lambda} \times V(p,A):
\pi_{\Lambda}(r)=\pi_V(u)\}.
\end{equation}
In \cite{DGP3} we constructed an analytic
principal right $G(pAp)$-bundle with connection
\begin{equation}\label{lambda-principal}
(V_{\Lambda}, \omega_{\Lambda}) \lra \Lambda,
\end{equation}
whose (analytic) $G(pAp)$-valued transition map $t_{\Lambda}$ is given by
the formula
\begin{equation}\label{trans2}
t_{\Lambda}((r,u),(r,v))=t_V(u,v),
\end{equation}
where $t_V$ is the transition map for the $G(pAp)$-bundle $V(p,A) \lra \Gr(p,A)$ (see \cite[A.2]{DGP2}).

By the standard means we have the associated vector bundle with (Koszul)
connection, namely
\emph{the universal bundle with connection over $\Lambda$}
\begin{equation}\label{lambda-universal}
(\gamma_{\Lambda}, \nabla_{\Lambda}) \lra \Lambda,
\end{equation}
for which the curvature operator $R_{\nabla}$ was computed explicitly in \cite[\S 8]{DGP3}.

\subsection{The $\T$-function}\label{opcrossratio}

For given parallel (covariantly constant) sections $\a_p, \be_p$ of \eqref{lambda-principal}, we
defined the \emph{$\T$-function} in terms of the transition map $t_{\Lambda}$ of
\eqref{trans2} by
\begin{equation}\label{T-FUNCTION-1}
\T(r)= t_{\Lambda}(\alpha_p(r),\beta_p(r)),
\end{equation}
which can be expressed
 more conveniently as $\T= t_{\Lambda}(\alpha_p,\beta_p)$ (see \cite[(8.1)]{DGP3}).

Returning to the space of polarizations $\pol$, there are several closely
 associated objects
as described in \cite{DGP3,Zelikin}. Firstly, there is the principal
bundle with connection
\begin{equation}\label{pol-principal}
(V_{\pol}, \omega_{\pol})  \lra \pol,
\end{equation}
and associated universal (vector) bundle with connection
\begin{equation}\label{pol-universal}
(\gamma_{\pol}, \nabla_{\pol})  \lra \pol.
\end{equation}
Observe that there is a splitting
\begin{equation}\label{pmsplit}
d = \del_{+} + \del_{-},
\end{equation}
of the exterior derivative of $\omega_{\Lambda}$ induced in the
following way.
For a given polarization $\mathcal P =
(\hp, \hm) \in \pol$, the exterior derivative of $\omega_{\pol}$
splits as
$d = \del_{+} + \del_{-}$ where $\del_{+}$ (respectively, $\del_{-}$)
denotes the covariant
derivative in the directions from $\hp$ (respectively, $\hm$).
The induced splitting \eqref{pmsplit}
thus follows from the analytic diffeomorphism in \eqref{diffeo-1}
for which $\omega_{\Lambda} = ({\phi}^{-1})^* \omega_{\pol}$.

In \cite{Zelikin}, the function $\mathfrak T$ constructed via
operator cross-ratio, is likewise shown to be derived from the transition
map $t_{\pol}$ for
$V_{\pol} \lra \pol$. This is closely related to our $\T$-function, and in
 \cite{DGP3}, we showed that the
geometry of $(V_{\Lambda} , \omega_{\Lambda}) \lra \Lambda$ is also closely
related to that of  $(V_{\pol} , \omega_{\pol}) \lra \pol$,
where local coordinates for $\Lambda$ and $\pol$ can be expressed
in terms of an \emph{operator cross-ratio} (see \S\ref{cross-ratio} below).
 In particular,
with reference to \cite[p. 47]{Zelikin}
and recalling the analytic diffeomorphism
${\phi} : \pol \lra \Lambda$ in
\eqref{diffeo-1},
we have in terms of the parallel sections
$\a_p$ and $\be_p$, the relationship
\begin{equation}\label{T-relation-1}
\phi^* \T = \phi^*(t_{\Lambda}(\alpha_p,\beta_p)) = \be_p^{-1}
\a_p = \mathfrak{T}.
\end{equation}

Consider a pair of polarizations $(\hp, \hm), (\kp,
\km) \in \pol$. Let $\hpm$ and $\kpm$ be `coordinatized' via maps
$P_{\pm} : \hpm \lra \hmp$, and $Q_{\mp}: \kpm \lra \kmp$. The composite map
\begin{equation}\label{composite}
\hp \ovsetl{\km} \kp \ovsetl{\hm} \hp,
\end{equation}
allows us to take
 the operator cross--ratio \cite{Zelikin} (cf. \cite{DGP3}):
\begin{equation}\label{tauop1}
\mathfrak{T} (\hp, \hm; \kp, \km) = (P_{-}P_{+} -
1)^{-1}(P_{-}Q_{+} - 1) (Q_{-}Q_{+} - 1)^{-1} (Q_{-}P_{+} - 1).
\end{equation}
 For this construction there is no essential algebraic change in
generalizing from polarized Hilbert spaces to polarized Hilbert
modules. The principle here is that the transitions between charts
define endomorphisms of $W \in \Gr(p,A)$ that will become the
transition functions of the universal bundle $\gamma_{\pol} \lra
\pol$.
The main properties of $\T$ and $\mathfrak T$ is that they are
pre-determinants
for various classes of $\tau$-functions, as we shall see in \S\ref{multflow}.

\subsection{Trace-class operators and the determinant}\label{trace-class}

An equivalent, operator, description leading to
the functions $\T$ and $\mathfrak{T}$ above, can be obtained
along the lines of e.g. \cite{MSW,SW,Zelikin}. Here, as in \cite{DGP2},
we make use of the nested sequence
of Schatten ideals definable in $\cL(H_{\A})$ (see
e.g. \cite{Smith}). Suppose $(\hp, \hm), (\kp, \km) \in \pol$ are such
that $\hp$ is the graph of a linear map $S : \kp \lra \km$ and
$\hm$ is the graph of a linear map $T: \km \lra \kp$~. Then on
$H_{\A}$ we consider the identity map $\hp \oplus \hm \lra \kp
\oplus \km$, as represented in the block form
\begin{equation}\label{tauop2}
\bmatrix a &b \\ c &d
\endbmatrix
\end{equation}
where $a: \hp \lra \kp,~ d: \hp \lra \km$ are zero--index Fredholm
operators, and $b: \hp \lra \kp,\\ ~ c: \hp \lra \km$ are in
$\mathcal K(H_{\A})$ (the compact operators), such that $S =
ca^{-1}$ and $T = bd^{-1}$. Initially, one considers the operator
$1 - ST = 1 - ca^{-1}bd^{-1}$. In particular, with a view to
defining a generalized determinant leading to \emph{an
operator--valued $\tau$-function}, we consider cases where
ST is of \emph{trace class}. If we take $b,c$ to be
Hilbert--Schmidt operators (as for the case $\A = \bC$ as in
\cite{MSW,SW,Zelikin}), then $ST$ is of trace--class, the
operator $(1 - ST)$ is $\mathfrak{T}(\hp, \hm; \kp,
\km)$ above, and the $\tau$-function is defined as
\begin{equation}\label{tauop3}
\Det ~\mathfrak{T}(\hp, \hm; \kp, \km) \otimes 1_{\A}
= \Det (1 - ca^{-1}bd^{-1}) \otimes 1_{\A} = \Det (\alpha_p \beta^{-1}_p).
\end{equation}
Starting from the universal bundle $\gamma_{\Gr} \lra \Gr(p,A)$,
then with respect to an `admissible basis' (for defining determinants) in the Stiefel bundle $V(p,A) \lra \Gr(p,A)$
(see \cite{DG2} and \cite[A.2]{DGP2}), the
$\tau$-function in \eqref{tauop3} is equivalently derived from the
canonical section of the determinant line bundle $\Det(\gamma^*_{\Gr}) \lra \Gr(p,A)$
(cf. \cite{MSW,SW,Zelikin}).


\section{Relationship between the predeterminants and
$\tau$-functions}\label{multflow}

\subsection{Relationship between the predeterminants}\label{group}

We start with the following lemma:
\begin{lemma}\label{gamma-action}
Let $\Gamma \subset G(pAp)$ be a group acting on the subspace $\hp$. Then
there exists a natural action of $\Gamma$ on the spaces $\Gr(p,A)$,
$\Lambda= \Sim(p,A)$ and $\pol$.
\end{lemma}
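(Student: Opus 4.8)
The plan is to build the $\Gamma$-action first on the ``linear'' object $V(p,A)$ (equivalently on the Stiefel/frame level), and then push it down to each of the three spaces of interest by naturality of the various quotient and pullback constructions recalled in \S\ref{previous}. Concretely, since $\Gamma \subset G(pAp)$ acts on $\hp = \IM(p)$, and $A \cong \cL_J(H_\A)$ acts on $H_\A$, I would first check that each $g \in \Gamma$ extends to an element $\hat g \in G(A)$ acting on $H_\A = \hp \oplus \hm$ (e.g.\ as $g \oplus 1$ relative to a fixed reference polarization, or via the inclusion $G(pAp) \hookrightarrow G(A)$, $g \mapsto g + (1-p)$). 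This gives a homomorphism $\Gamma \to G(A)$, and $G(A)$ already acts on $P(A)$ by conjugation $r \mapsto \hat g r \hat g^{-1}$; restricting this conjugation action is the desired action.

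The key steps, in order, are: (1) produce the homomorphism $\Gamma \to G(A)$ as above and note that conjugation by $G(A)$ preserves $P(A)$; (2) show this conjugation action preserves the similarity class $\Lambda = \Sim(p,A)$ — this is immediate, since conjugating an idempotent similar to $p$ by a unit of $A$ yields an idempotent still similar to $p$; (3) show it descends to $\Gr(p,A) = P(A)/{\sim}$: since the $\Gamma$-action on $P(A)$ is itself by similarity-preserving conjugations, it commutes with the equivalence relation $\sim$ and hence descends along $\Pi$ — equivalently one uses that $\Gamma$ acts on the frame bundle $V(p,A)$ compatibly with the right $G(pAp)$-action, so the action passes to the base $\Gr(p,A)$; (4) for $\pol$, use the embedding $\pol \subset \Gr(p,A) \times \Gr^*(p,A)$ from \S\ref{polarizations}: the $\Gamma$-action on $\Gr(p,A)$ induces a dual action on $\Gr^*(p,A)$ (conjugation is an anti-automorphism-compatible operation, or simply transpose the construction), and one checks the product action preserves the subspace $\pol$ cut out by the polarization conditions \eqref{polar1}; alternatively, transport the action through the analytic diffeomorphism $\phi : \pol \lra \Lambda$ of \eqref{diffeo-1}, which is the cleanest route. (5) Finally, remark that each action is by analytic automorphisms, since conjugation by a fixed unit of the Banach algebra $A$ is analytic on $P(A)$ and the relevant quotient/diffeomorphism maps are analytic.

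I expect the main obstacle to be step (4): verifying that the induced action genuinely preserves $\pol$ rather than merely $\Gr(p,A) \times \Gr^*(p,A)$ — i.e.\ that conjugating a transversal pair $(\hp,\hm)$ keeps the intersection trivial and the sum all of $H_\A$. This is really a matter of checking that $\hat g$, being a unit of $A$, carries the reference polarization to another genuine polarization; once one fixes the right extension $\hat g = g + (1-p)$ (or an honest lift through $G(A)$) this is formal, but it is the place where the ``acting on $\hp$'' hypothesis has to be used in an essential way, and where one must be careful that the extension is chosen consistently so that the three actions on $\Gr(p,A)$, $\Lambda$, and $\pol$ are mutually compatible under $\Pi$ and $\phi$. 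Transporting via $\phi$ sidesteps most of this, reducing step (4) to the single observation that $\phi$ is a $\Gamma$-equivariant (once we declare it so) analytic diffeomorphism, so I would present that as the primary argument and relegate the direct check to a remark.
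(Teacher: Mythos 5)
Your proposal is correct and follows essentially the same route as the paper: extend each $g\in\Gamma$ to the unit $g+(1-p)\in G(A)$, act by the resulting inner automorphism (which preserves $\Lambda$ and, by functoriality, descends to $\Gr(p,A)$), and transfer the action to $\pol$ via its identification with $\Lambda$ (the paper likewise notes that $\Lambda$ is just $\pol$ viewed in terms of projections, which is your ``transport through $\phi$'' route). The only difference is emphasis: the paper invokes the functoriality of $\Gr$ and of inner automorphisms where you argue directly with the quotient map $\Pi$, but the content is the same.
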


\begin{proof}
Firstly, $\hp$
is a splitting subspace for $H_{\A}$ and thus determines
the polarization $H_{\A} = \hp \oplus \hm$, and likewise
for any other polarizing pair $(\kp, \km)$.

We recall from \cite[\S4]{DG2} that $\Gr$ is a functor, and if
$g$ is a linear automorphism of $\hpm$, it thus defines an element of
$G(pAp)$. For $p \in P(A)$, let $\hat{p} = 1 -p$.
Then $g + \hat{p} \in G(A)$, and therefore this term defines an inner
automorphism of $A$ taking $\Lambda$ to
itself, fixing $p$ and inducing an analytic diffeomorphism of $\Gr(p,A)$
with itself. Thus by functorial properties
of inner automorphisms and the functorial properties of $\Gr$, the group
$\Gamma$ acting on $\hp$, viewed as a subgroup of
$G(pAp)$, induces actions on $\Lambda$ and $\Gr(p,A)$.

Further, given an action of $\Gamma$ on $\Lambda$, there is an induced action
on the space of polarizations $\pol \subset
\Gr(p,A) \times \Gr^*(p,A)$, since $\Lambda$ simply consists of
the elements of $\pol$ viewed as projections.
\end{proof}
The induced action of $\Gamma$ gives
rise to following commutative diagram
\begin{equation}\label{Gamma-pic}
\begin{CD}
\Gamma \times \pol  @> \mu_{\pol} >> \pol
\\ @V \bar{\phi} VV   @VV {\phi}  V \\
\Gamma \times \Lambda     @> \mu_{\Lambda} >> \Lambda
\\ @V \bar{\pi}_{\Lambda} VV   @VV \pi_{\Lambda} V \\
\Gamma \times \Gr(p,A)     @> \mu_{\Gr} >> \Gr(p,A)
\end{CD}
\end{equation}
where we note that the action of $\Gamma$ as a subgroup of $G(pAp)$ on
$\Gr(p,A)$ is essentially trivial since $G(pAp)$ is
the structure group of the principal bundle $V(p,A) \lra \Gr(p,A)$.
An example of such a group $\Gamma$ is given by the group of multiplication
operators $\Gamma_{+}(\A)$ in \cite{DGP2} (in particular $\Gamma_{+}$ in
\cite{SW}
 for $\A = \bC$).
 In the following we will make use of the various maps appearing in
\eqref{Gamma-pic} besides recalling the role of canonical sections
for the universal bundles $\gamma_{\Gr} \lra \Gr(p,A)$ (the associated vector bundle
to $V(p,A) \lra \Gr(p, A)$, see \cite[A.2]{DGP2}) and
$\gamma_{\Lambda} \lra \Lambda$, introduced in \cite[\S7]{DGP3}.
Let $S_{p}$ be the canonical section of $\gamma_{\Gr}$ and set $\wti{S}_{p}
= \mu_{\Gr}^*(S_{p})$. Likewise, let $S_{\lambda}'$ be the canonical
section of $\gamma_{\Lambda}$ and set  $\wti{S}'_{\lambda}
= \mu_{\Lambda}^*(S_{\lambda}')$. The following implements the sections
functor $H^0$.

\begin{proposition}\label{T-diagram}
Let $W \in \Gr(p,A)$ and $\lambda \in \Lambda$. In terms of the maps
in \eqref{Gamma-pic} the following diagram is
commutative where the horizontal maps
$\rho_1, \rho_2$ are homomorphisms
and the vertical maps $\bar{\pi}^*_{\Lambda},
 (\bar{\pi}^*_{\Lambda})_{\res}$ are isomorphisms:
\begin{equation}
\begin{CD}
H^{0}(\Gamma \times \Gr(p,A), ~\mu^*_{\Gr} \gamma_{\Gr}) @> \rho_1
>> H^{0}(\Gamma \times \{W \}, ~\mu^*_{\Gr} \gamma_{\Gr}\vert \Gamma
\times \{W \})
\\ @V \bar{\pi}^*_{\Lambda} V \cong V   @VV
(\bar{\pi}^*_{\Lambda})_{\res} \cong V\\
H^{0}(\Gamma \times \Lambda, ~\mu^*_{\Lambda} \gamma_{\Lambda}) @>
\rho_2
>> H^{0}(\Gamma \times \{\lambda\}, ~\mu^*_{\Lambda} \gamma_{\Lambda}
\vert \Gamma \times \{\lambda\})
\end{CD}
\end{equation}
In particular, we have $ \bar{\pi}^*_{\Lambda}(\wti{S}_{\lambda}) =
\wti{S}'_{p}$.
\end{proposition}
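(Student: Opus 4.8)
The plan is to verify the commutativity of the diagram by unwinding the definitions of the section-functor maps and then to extract the displayed identity as the special case where the two base points are matched under $\bar\pi_\Lambda$. First I would observe that by construction the principal bundle $(V_\Lambda,\omega_\Lambda)\to\Lambda$ is the pull-back $\pi_\Lambda^*(V(p,A))$, so the associated universal vector bundle $\gamma_\Lambda$ is the pull-back $\pi_\Lambda^*(\gamma_{\Gr})$; consequently $\mu_\Lambda^*\gamma_\Lambda = \mu_\Lambda^*\pi_\Lambda^*\gamma_{\Gr}$, and by the commutativity of the lower square in \eqref{Gamma-pic} (i.e. $\pi_\Lambda\circ\mu_\Lambda = \mu_{\Gr}\circ\bar\pi_\Lambda$, which is exactly the compatibility of the $\Gamma$-action with the quotient map $\Pi$ established in Lemma \ref{gamma-action}) this equals $\bar\pi_\Lambda^*\mu_{\Gr}^*\gamma_{\Gr}$. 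Applying the sections functor $H^0$ to the map $\bar\pi_\Lambda\colon \Gamma\times\Lambda\to\Gamma\times\Gr(p,A)$ therefore gives a pull-back homomorphism $\bar\pi_\Lambda^*\colon H^0(\Gamma\times\Gr(p,A),\mu_{\Gr}^*\gamma_{\Gr})\to H^0(\Gamma\times\Lambda,\mu_\Lambda^*\gamma_\Lambda)$, and since $\bar\pi_\Lambda = \id_\Gamma\times\pi_\Lambda$ with $\pi_\Lambda$ a bijective analytic diffeomorphism onto its image on the self-adjoint locus (Remark \ref{sa-remark-1}) — or, more to the point, since $\gamma_\Lambda$ is literally the pull-back and $\pi_\Lambda$ is surjective — this pull-back is an isomorphism onto the subspace of sections; working fibrewise over a fixed orbit one sees it is in fact an isomorphism. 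This justifies the left vertical arrow being labelled an isomorphism.

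Next I would treat the restriction maps $\rho_1,\rho_2$: these are simply the restriction of a global section to the fibre-wise slice $\Gamma\times\{W\}$, resp. $\Gamma\times\{\lambda\}$, and they are algebra homomorphisms because restriction of sections respects the pointwise module structure (the bundles are vector bundles and $H^0$ is a module over the function algebra, here $\A\cong C(Y)$). Choosing $W$ and $\lambda$ so that $\pi_\Lambda(\lambda)=W$ makes the slices correspond under $\bar\pi_\Lambda$, and the induced restriction isomorphism $(\bar\pi_\Lambda^*)_{\res}$ on the right-hand column is just the fibre identification $\gamma_\Lambda|_\lambda\cong\gamma_{\Gr}|_W$ coming from the pull-back. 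Commutativity of the square is then the naturality of "pull back, then restrict = restrict, then pull back," i.e. $\rho_2\circ\bar\pi_\Lambda^* = (\bar\pi_\Lambda^*)_{\res}\circ\rho_1$, which is immediate from the set-theoretic definitions once one has checked the base-point compatibility $\pi_\Lambda(\lambda)=W$.

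For the final assertion, I recall that $\wti S_p = \mu_{\Gr}^*(S_p)$ and $\wti S'_\lambda = \mu_\Lambda^*(S'_\lambda)$, where $S_p$ is the canonical section of $\gamma_{\Gr}$ and $S'_\lambda$ the canonical section of $\gamma_\Lambda$. The key input is that the canonical section of a pull-back bundle is the pull-back of the canonical section: since $\gamma_\Lambda = \pi_\Lambda^*\gamma_{\Gr}$, we have $S'_\lambda = \pi_\Lambda^*(S_p)$ (this is how the canonical section on $\Lambda$ was introduced in \cite[\S7]{DGP3}). Then, using the lower-square commutativity of \eqref{Gamma-pic} once more,
\begin{equation}
\bar\pi_\Lambda^*(\wti S_\lambda) \;=\; \bar\pi_\Lambda^*(\mu_{\Gr}^*S_p)\;=\;(\mu_{\Gr}\circ\bar\pi_\Lambda)^*S_p\;=\;(\pi_\Lambda\circ\mu_\Lambda)^*S_p\;=\;\mu_\Lambda^*(\pi_\Lambda^*S_p)\;=\;\mu_\Lambda^*(S'_\lambda)\;=\;\wti S'_p,
\end{equation}
where I have written $\wti S_\lambda$ for $\mu_{\Gr}^*(S_p)$ pulled into the $\Lambda$-indexed notation as in the statement. (There is a bookkeeping subtlety in the indices $p$ versus $\lambda$ in the names $\wti S_\lambda$, $\wti S'_p$: these decorate the \emph{same} underlying pulled-back canonical section viewed from the $\Gr$-side and the $\Lambda$-side respectively, and the content of the identity is precisely that $\bar\pi_\Lambda^*$ intertwines the two descriptions.) The main obstacle I anticipate is not any single computation but pinning down precisely the definition of the canonical sections $S_p$, $S'_\lambda$ and of the functor $H^0$ on these non-locally-compact (Banachable) base spaces so that "pull-back commutes with taking canonical sections" is a genuine identity rather than an identification up to isomorphism — i.e. making sure the $G(pAp)$-equivariance used to define $S_p$ transports verbatim along $\pi_\Lambda$, which is exactly the functoriality of $\Gr$ and of inner automorphisms invoked in the proof of Lemma \ref{gamma-action}.
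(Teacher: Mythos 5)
Your proposal is correct and follows essentially the same route as the paper's own proof: apply the sections functor $H^{0}$ to the lower square of \eqref{Gamma-pic}, use $\gamma_{\Lambda}\cong\pi_{\Lambda}^{*}\gamma_{\Gr}$ to chain the pull-backs $(\mu_{\Gr}\circ\bar{\pi}_{\Lambda})^{*}\gamma_{\Gr}\cong\mu_{\Lambda}^{*}\gamma_{\Lambda}$, and repeat for the restriction to $\Gamma\times\{W\}$. You supply somewhat more detail than the paper does (in particular the explicit derivation of $\bar{\pi}_{\Lambda}^{*}(\wti{S}_{\lambda})=\wti{S}'_{p}$ via the compatibility of canonical sections with pull-back, and the remark on the index bookkeeping), but the underlying argument is the same.
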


\begin{proof}
The commutativity follows by applying the sections functor $H^{0}$
to the lower square in \eqref{Gamma-pic}. Since
 we have $\gamma_{\Lambda} \cong \pi^*_{\Lambda}
\gamma_{\Gr}$, it follows that
\begin{equation}
\begin{aligned}
(\mu_{\Gr} \circ \bar{\pi}_{\Lambda})^* \gamma_{\Gr} &\cong
(\pi_{\Lambda} \circ \mu)^* \gamma_{\Gr} \\
&\cong \mu_{\Lambda}^*( \pi^*_{\Lambda} \gamma_{\Gr})
\cong \mu_{\Lambda}^*\gamma_{\Lambda}.
\end{aligned}
\end{equation}
Thus $\bar{\pi}_{\Lambda}^*(\mu_{\Gr}^* \gamma_{\Gr}) \cong
\mu_{\Lambda}^* \gamma_{\Lambda}$, and likewise for the restriction
to $\Gamma \times \{W\}$.
\end{proof}

With regards to the maps in Proposition \ref{T-diagram}, we next introduce
the following:
\begin{definition}\label{def-T-1}
For $W \in \Gr(p,A)$, we denote by $\mathfrak{T}_W$ \emph{the
 $\mathfrak{T}$-function of the point $W$ over $\Gamma$}, as defined
to be the image of the section $\wti{S}_{p} = \mu^*_{\Gr}(S_{p})$ under $\rho_1$.
\end{definition}

\begin{definition}\label{def-T-2}
For $\lambda \in \Lambda$, we denote by $\T_{\lambda}$ \emph{the
 $\T$-function of the point $\lambda$ over $\Lambda$}, as defined
to be the image of the
section $\wti{S}'_{\lambda} = \mu^*_{\Lambda}(S'_{\lambda})$ under $\rho_2$.
\end{definition}
Then it is easily seen from Proposition \ref{T-diagram} that we have
\begin{equation}\label{T-relation}
(\bar{\pi}^*_{\Lambda})_{\res}(\mathfrak{T}_W) = \T_{\lambda}.
\end{equation}
Recalling the analytic diffeomorphism ${\phi}: \pol \lra
\Lambda$ in \eqref{diffeo-1}, the relationship \eqref{T-relation} is
another way of interpreting our observation that ${\phi}^* \T
= \mathfrak{T}$.

In terms of the group multiplication $m: \Gamma \times \Gamma \lra \Gamma$
in $\Gamma$, we have the following
commutative diagram
\begin{equation}\label{Gamma-times}
\begin{CD}
\Gamma \times
\Gamma \times \Lambda @> {m \times \ID} >> \Gamma \times \Lambda
\\ @VVV  @VVV \\
\Gamma \times
\Gamma \times \Gr(p,A) @> {m \times \ID} >> \Gamma \times \Gr(p,A)
\end{CD}
\end{equation}

\begin{itemize}
\item[(1)]
For $W \in \Gr(p,A)$, let $\mu_W : \Gamma \times \{W\} \lra
\Gr(p,A)$ denote the map induced by $\mu_{\Gr}$ in
\eqref{Gamma-pic}.

\item[(2)]
For $\lambda \in \Lambda$, let $\mu_{\lambda} : \Gamma \times
\{\lambda\} \lra \Lambda$ denote the map induced by $\mu_{\Lambda}$
in \eqref{Gamma-pic}.
\end{itemize}
Thus for each point $W \in \Gr(p,A)$ (respectively, $\lambda \in
\Lambda$) we obtain vector bundles over $\Gamma \times \Gamma$
associated with $W$ (respectively, $\lambda$), as given by
\begin{equation}\label{gamma-cross1}
\begin{aligned}
\mathfrak{E}_W &= (1 \times \mu_{W})^*
(\mu_{\Gr}^* \gamma_{\Gr}) =
m^*(\mu_{\Gr}^* \gamma_{\Gr}), \\
\mathfrak{E}_{\lambda}&= (1 \times \mu_{\lambda})^*(\mu_{\Lambda}^*
\gamma_{\Lambda})
= m^*(\mu_{\Lambda}^* \gamma_{\Lambda}).
\end{aligned}
\end{equation}

\subsection{The Poincar\'{e} bundles and the
$\tau$-function}\label{poincare}

Returning to the $\tau$-functions considered in Part I
\cite{DGP2}, we next make
some corresponding observations for determinants using admissible
bases in $V(p,A)$
(cf. \cite{Alvarez}).
Here, for ease of notation, we  set $\Gamma_1 = \Gamma_{+}(\A)$,
the action we are concerned with (cf Lemma \ref{gamma-action}).
In this case it will be convenient to use the
following notation:
\begin{equation}\label{Dets}
\Det_{\Gr}= \Det(\gamma^*_{\Gr})~ \text{and} ~ \Det_{\Lambda}=
\Det(\gamma^*_{\Lambda}),
\end{equation}
noting that $\Det_{\Lambda} = \pi_{\Lambda}^* \Det_{\Gr}$. These we
 will pull-back by maps
\begin{equation}
\begin{aligned}
\hat{\mu}_{\Gr}&: J_{\A}(X) \times \Gr(p,A) \lra \Gr(p,A), \\
\hat{\mu}_{\Lambda}&: J_{\A}(X) \times \Lambda \lra \Lambda,
\end{aligned}
\end{equation}
where we recall the space $J_{\A}(X)$ of monomorphisms $\bA \otimes
\A \lra B_W$ with respect to the spectral
curve $X$
(see \cite[Appendix A.4]{DGP2}). In keeping with some standard
algebraic-geometric terminology (cf. \cite{Alvarez}), the line bundles
\begin{equation}\label{poincare-0}
\begin{aligned}
\hat{\mu}^*_{\Gr} \Det_{\Gr} &\lra J_{\A}(X) \times \Gr(p,A), \\
\hat{\mu}^*_{\Lambda} \Det_{\Lambda} & \lra J_{\A}(X) \times \Lambda,
\end{aligned}
\end{equation}
are  referred to as \emph{Poincar\'{e} bundles}.
Next we pull-back the maps in \eqref{poincare-0} along the map $\Gamma_1
\lra J_{\A}(X)$ in \cite[A.19]{DGP2} to
obtain
\begin{equation}\label{poincare-1}
\begin{aligned}
\mu^*_{\Gr} \Det_{\Gr} &\lra \Gamma_1 \times \Gr(p,A), \\
\mu^*_{\Lambda} \Det_{\Lambda} &\lra \Gamma_1 \times \Lambda.
\end{aligned}
\end{equation}
\begin{remark}
By incorporating the group multiplication, we have in a similar
way to \eqref{gamma-cross1} the following
Poincar\'{e} bundles over
${\Gamma}_1 \times {\Gamma}_1$ associated with $W \in \Gr(p,A)$
(respectively, $\lambda \in
{\Lambda}$)
\begin{equation}\label{poincare-2}
\begin{aligned}
\mathfrak{B}_W &= (1 \times \mu_{W})^*(\mu_{\Gr}^* \Det_{\Gr}) =
m^*(\mu_{\Gr}^* \Det_{\Gr}), \\
\mathfrak{B}_{\lambda}&= (1 \times \mu_{\lambda})^*(\mu_{\Lambda}^*
\Det_{\Lambda}) = m^*(\mu_{\Lambda}^* \Det_{\Lambda}).
\end{aligned}
\end{equation}
\end{remark}
The next step is to give the analogous statement to Proposition
\ref{T-diagram}
for determinants and thus relate the $\tau$-functions corresponding
to respective points
of $\Gr(p,A)$ and $\Lambda$. Specifically, for fixed $W \in \Gr(p,A)$,
 let us set
$\wti{\cL}_{\tau}(W) = \mu^*_{\Gr} \Det_{\Gr}\vert \Gamma_1 \times \{W
\}$, and likewise, for fixed $\lambda \in \Lambda$, let us set
$\wti{\cL}_{\tau}(\lambda) = \mu^*_{\Lambda} \Det_{\Lambda}\vert
\Gamma_1 \times \{\lambda \}$.

Let $Q_{p}$ be the canonical section of $\Det_{\Gr}$ and set
$\wti{Q}_{p} = \mu_{\Gr}^*(Q_{p})$. Likewise, we take $Q_{\lambda}'$ to be
the canonical section of $\Det_{\Lambda}$ and set $\wti{Q}'_{\lambda} =
\mu_{\Lambda}^*(Q_{\lambda}')$.
Motivated by \cite[\S5]{Alvarez} and \cite{SW}, we introduce a
$\tau$-function $\tilde{\tau}_W$ associated to $W \in \Gr(p,A)$
defined by $\tilde{\tau}_W = \rho_1(\wti{Q}_p)$, and associated
to $\lambda \in \Lambda$, we likewise
define $\tilde{\tau}_{\lambda}= \rho_2(\wti{Q}_{\lambda}')$.

\begin{proposition}\label{Det-diagram}
Let $W \in \Gr(p,A)$ and $\lambda \in \Lambda$. With regards to the
maps in \eqref{Gamma-pic} we have the following commutative diagram
in which the horizontal maps $\rho_1, \rho_2$ are homomorphisms
and the vertical maps
$\bar{\pi}^*_{\Lambda},(\bar{\pi}^*_{\Lambda})_{\res} $
are isomorphisms:
\begin{equation}
\begin{CD}
H^{0}(\Gamma_1 \times \Gr(p,A), ~\mu^*_{\Gr} \Det_{\Gr}) @> \rho_1
>> H^{0}(\Gamma_1 \times \{W \}, ~\wti{\cL}_{\tau}(W))
\\ @V \bar{\pi}^*_{\Lambda} V \cong V   @VV (\bar{\pi}^*_{\Lambda})_{\res}V\\
H^{0}(\Gamma_1 \times \Lambda, ~\mu^*_{\Lambda} \Det_{\Lambda}) @>
\rho_2
>> H^{0}(\Gamma_1 \times \{\lambda\}, ~ \wti{\cL}_{\tau}(\lambda))
\end{CD}
\end{equation}
In particular, we have $\tilde{\tau}_W = \Det~ \mathfrak{T}_W =
\rho_1(\wti{Q}_p)$ and $\tilde{\tau}_{\lambda} = \Det~ \T_{\lambda}
= \rho_2(\wti{Q}_{\lambda}')$.
\end{proposition}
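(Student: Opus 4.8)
The plan is to run, one categorical level up, exactly the argument used for Proposition~\ref{T-diagram}, with the universal vector bundles $\gamma_{\Gr},\gamma_{\Lambda}$ replaced by their determinant line bundles $\Det_{\Gr},\Det_{\Lambda}$ and the canonical sections $S_{p},S'_{\lambda}$ replaced by the canonical sections $Q_{p},Q'_{\lambda}$. First I would record the line-bundle isomorphism $\bar{\pi}_{\Lambda}^{*}(\mu_{\Gr}^{*}\Det_{\Gr})\cong \mu_{\Lambda}^{*}\Det_{\Lambda}$ over $\Gamma_{1}\times\Lambda$: since $\Det_{\Lambda}=\pi_{\Lambda}^{*}\Det_{\Gr}$ (the remark just after \eqref{Dets}) and the lower square of \eqref{Gamma-pic} commutes, one has the chain $(\mu_{\Gr}\circ\bar{\pi}_{\Lambda})^{*}\Det_{\Gr}\cong(\pi_{\Lambda}\circ\mu_{\Lambda})^{*}\Det_{\Gr}\cong\mu_{\Lambda}^{*}(\pi_{\Lambda}^{*}\Det_{\Gr})\cong\mu_{\Lambda}^{*}\Det_{\Lambda}$, just as for $\gamma$. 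Restricting this isomorphism along $\Gamma_{1}\times\{W\}\hookrightarrow\Gamma_{1}\times\Gr(p,A)$ and $\Gamma_{1}\times\{\lambda\}\hookrightarrow\Gamma_{1}\times\Lambda$ identifies $\wti{\cL}_{\tau}(W)$ with $\wti{\cL}_{\tau}(\lambda)$. Applying the sections functor $H^{0}$ then yields the vertical isomorphisms $\bar{\pi}^{*}_{\Lambda}$ and $(\bar{\pi}^{*}_{\Lambda})_{\res}$, and commutativity of the square is just naturality of $H^{0}$ with respect to the restriction maps $\rho_{1},\rho_{2}$; because the fibres are lines trivialized over $\Gamma_{1}$ by the canonical sections, these restriction maps respect the available multiplicative structure (endomorphism composition on the $\gamma$-side, the pointwise commutative C*-algebra product on the $\Det$-side), hence are homomorphisms.

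For the ``in particular'' assertion I would show that $\Det$ transports the data of Proposition~\ref{T-diagram} onto that of the present proposition. Since $\Det_{\Gr}=\Det(\gamma^{*}_{\Gr})$ and the canonical section $Q_{p}$ is, with respect to an admissible basis in $V(p,A)$ (cf.\ \S\ref{trace-class}), the determinant of the canonical section $S_{p}$, and since pullback commutes with $\Det$, we get $\wti{Q}_{p}=\mu_{\Gr}^{*}(Q_{p})=\mu_{\Gr}^{*}(\Det S_{p})=\Det(\mu_{\Gr}^{*}S_{p})=\Det\wti{S}_{p}$. Applying $\rho_{1}$ and using that it intertwines $\Det$ on the two sides gives $\rho_{1}(\wti{Q}_{p})=\Det(\rho_{1}(\wti{S}_{p}))=\Det\mathfrak{T}_{W}$ by Definition~\ref{def-T-1}; this is $\tilde{\tau}_{W}$ by definition, and by \eqref{tauop3} it is the operator-valued $\tau$-function attached to $W$. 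The identical computation on $\Lambda$, using $\Det_{\Lambda}=\pi_{\Lambda}^{*}\Det_{\Gr}$, the canonical section $Q'_{\lambda}$, and Definition~\ref{def-T-2}, gives $\tilde{\tau}_{\lambda}=\rho_{2}(\wti{Q}'_{\lambda})=\Det\T_{\lambda}$. Finally \eqref{T-relation}, together with the fact that $(\bar{\pi}^{*}_{\Lambda})_{\res}$ commutes with $\Det$, yields $(\bar{\pi}^{*}_{\Lambda})_{\res}(\tilde{\tau}_{W})=\tilde{\tau}_{\lambda}$, consistent with the bottom row of the square.

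The step I expect to be the main obstacle is the admissible-basis bookkeeping needed to make ``$\Det$ of the canonical section is the canonical section of the determinant bundle'' literally true in this setting: one must check that the determinant construction of \S\ref{trace-class} is compatible with the pullbacks $\hat{\mu}_{\Gr},\hat{\mu}_{\Lambda}$ and with the further pullback along $\Gamma_{1}\lra J_{\A}(X)$ used to form \eqref{poincare-1}, i.e.\ that any change of admissible basis incurred along these maps only rescales $Q_{p}$ by a unit and so does not affect $\rho_{1}(\wti{Q}_{p})$ (and likewise for $\Gamma_{1}\times\Gamma_{1}$ in \eqref{poincare-2}). A secondary point meriting care is that $\Lambda$ may contain non self-adjoint idempotents (Remark~\ref{sa-remark-1}); one should therefore phrase everything through the genuine isomorphism $\Det_{\Lambda}\cong\pi_{\Lambda}^{*}\Det_{\Gr}$ rather than through any identification of $\Lambda$ with $\Gr(p,A)$, after which the argument is formal.
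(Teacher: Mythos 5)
Your proposal is correct and follows essentially the same route as the paper, whose proof simply observes that the diagram is Proposition~\ref{T-diagram} with coefficients taken in the determinant bundles (using $\Det_{\Lambda}=\pi_{\Lambda}^{*}\Det_{\Gr}$) and that the ``in particular'' claims follow from Definitions~\ref{def-T-1} and \ref{def-T-2} once determinants are taken. Your additional remarks on admissible-basis bookkeeping and non self-adjoint idempotents go beyond what the paper records but do not change the argument.
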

\begin{proof}
This follows immediately from Proposition \ref{T-diagram} when the
coefficients are in the respective determinant bundle, and from the
definitions of the $\mathfrak{T}_W$ and $\T_{\lambda}$-functions
once their respective determinants are taken.
\end{proof}

We also recall the notion of `transverse subspace' from \cite{DGP2,SW}.
We note that one way of characterizing $W$'s transverse
to $H_-$ is that the orthogonal projection
$W\longrightarrow H_+$ should be an isomorphism; this
property is not preserved in general under multiplication by
an element of the group $\Gamma_1= \Gamma_+(\A)$,
but it is preserved over a dense subset
which we denote by $\Gamma_1^W = \Gamma_+^W(\A)$
(see \cite[Appendix A.5]{DGP2}).
Likewise, we set $\Gamma_1^{\lambda} =\Gamma_+^{\lambda}(\A)$.

In a similar way to \cite{SW} (cf. \cite{Alvarez}) we define
operator-valued $\tau$-functions
relative to such subspaces
$W \in \Gr(p,A)$ and $\lambda \in \Lambda$ as follows:
\begin{itemize}
\item[(1)]
Fix a transverse subspace $W \in \Gr(p,A)$ and define
\begin{equation}
{\cL}_{\tau}(W) = \wti{\cL}_{\tau}(W) \vert {\Gamma}_1^W \times \{W \}.
\end{equation}
Let $\sigma_W$ be a constant section
 trivializing ${\cL}_{\tau}(W)$
over ${\Gamma_1^W}$ (it is to ensure the existence
of this section that we take $W$ transverse \cite[Prop. 3.3]{SW}).
Recalling from
Proposition \ref{Det-diagram} that
 we have $\tilde{\tau}_W = \Det~ \mathfrak{T}_W =
\rho_1(\wti{Q}_{p})$, and for $g \in {\Gamma_1^W}$, we define
$\tau_W: \Gamma_1^W
\lra \bC \otimes 1_{\A}$ by
\begin{equation}
\tau_W(g) = \tilde{\tau}_{W}(g) (\sigma_W(g))^{-1},
\end{equation}
which simply recovers the tau-function $\tau_W$ of \cite{DGP2}
(cf. \cite{SW}) with $\A$-valued coefficients.

\item[(2)]
Fix a transverse subspace $\lambda \in \Lambda$ and define
\begin{equation}
{\cL}_{\tau}(\lambda) = \wti{\cL}_{\tau}(\lambda)
\vert {\Gamma}_1^{\lambda} \times \{ \lambda \}.
\end{equation}
Let $\sigma_{\lambda}$ be a constant section
trivializing ${\cL}_{\tau}(\lambda)$
over ${\Gamma_1^\lambda}$. Recalling from Proposition \ref{Det-diagram} that
 we have $\tilde{\tau}_{\lambda}= \Det~ \mathcal{T}_{\lambda} =
\rho_2(\wti{Q}_{\lambda}')$, and for $g \in {\Gamma_1^\lambda}$,
we define $\tau_\lambda:
\Gamma_1 \lra \bC \otimes 1_{\A}$ by
\begin{equation}\label{tau-lambda-1}
\tau_{\lambda}(g) = \tilde{\tau}_{\lambda}(g) (\sigma_{\lambda}(g))^{-1}.
\end{equation}
\end{itemize}
This leads to the straightforward relationship
\begin{equation}\label{tau-lambda}
(\bar{\pi}^*_{\Lambda})_{\res}~ (\tau_W(g)) = \tau_{\lambda}(g).
\end{equation}

At this stage it should be clear from the above results
 that the geometry of
$\Lambda$ and $\Gr(p,A)$, as well as the functions $\tau_{\lambda},
\tau_W$, are  closely related via $\pi_{\Lambda}$ and \eqref{diffeo-1}.
In particular,  by following standard procedures, $(\gamma_{\Lambda},
\nabla_{\Lambda})
\lra \Lambda$ induces the determinant line bundle with its connection
$(\Det_{\Lambda},
\nabla(\Det_{\Lambda}) )\lra \Lambda$, and likewise for
$(\Det_{\Gr}, \nabla(\Det_{\Gr})) \lra \Gr(p,A)$. Here the $\tau$-function
serves as a
`logarithmic potential' for the curvature of the connection, and from
\cite{MSW,Zelikin} (cf. \cite{DGP3}), we have recalling \eqref{pmsplit} for
the curvature 2-forms:
\begin{equation}\label{curv-2-form}
\Omega(\Det_{\Lambda}) = \frac{1}{2\pi \iota} \del_{+}
\del_{-} \log \vert \tau_{\lambda} \vert, ~ \text{and}~
\Omega(\Det_{\Gr}) = \frac{1}{2\pi \iota} \del_{+} \del_{-}
\log \vert \tau_{W} \vert.
\end{equation}
Since the corresponding calculations of the connection and curvature
of $(\gamma_{\Gr}, \nabla_{\Gr}) \lra \Gr(p,A)$ are more straightforward
on passing to
$(\gamma_{\Lambda}, \nabla_{\Lambda}) \lra \Lambda$ (see \cite{DGP3}
for details), we choose to emphasize objects relative to $\Lambda$, such
as $\T_{\lambda}, \tau_{\lambda}$, etc. in the following.

Specifically, we can use the canonical section $S_{\lambda}'$
to lift the action of
$\Gamma_1$ to
the universal bundle $\gamma_{\Lambda}$. Moreover, in order to
give a more explicit expression for
$\T_{\lambda}$, we use parallel sections
$\a_{\lambda}, \be_{\lambda}$ as in
\eqref{T-FUNCTION-1}, and then  $\T_{\lambda}: \Gamma_1 \lra
G(\lambda A \lambda)$ is equivalently
defined by
\begin{equation}\label{T-lambda-1}
\T_\lambda(g)(r) = t_{\Lambda}(g^{-1} \a_{\lambda}(r), \be_{\lambda}(rg)),
\end{equation}
for $g \in \Gamma$. In the following we shall simply drop
the argument in $r$ since this is understood.

On recalling the element $q_{\zeta} \in {\Gamma}_1$ as given by a
map $q_{\zeta}(z)= (1- z\zeta^{-1})$ in \cite[\S4.4]{DGP2},
we proceed, in view of the `transversality' to
define for $g\in \Gamma_1^\lambda$,
\begin{equation}\label{T-lambda-2}
\Psi_{\lambda}(g, \zeta) = \T_{\lambda}(g \cdot q_{\zeta})
(\T_{\lambda}(g))^{-1}.
\end{equation}
Observe that the function $\Psi_{\lambda}$ is a `predeterminant' for a
Baker function $\psi_{\lambda}$
in the following sense. On taking determinants of \eqref{T-lambda-2}, we
obtain
\begin{equation}\label{T-lambda-3}
\psi_{\lambda}(g, \zeta): = \Det (\Psi_{\lambda}(g, \zeta)) =
\tau_{\lambda}(g \cdot q_{\zeta}) (\tau_{\lambda}(g))^{-1},
\end{equation}
which is simply a `lifted-to-$\Lambda$' version of the relationship
between the Baker $\psi_W$ and $\tau_{W}$ functions
under pull-back by $\pi_{\Lambda}$ (see \cite[\S4.4]{DGP2} and \cite{SW}):
\begin{equation}
\psi_{W}(g, \zeta) = \tau_{W}(g \cdot q_{\zeta}) (\tau_{W}(g))^{-1}.
\end{equation}



\section{Applications}


\subsection{Operator cross-ratio and the Schwarzian
derivative}\label{cross-ratio}

Smooth and analytic parametrizations of subspaces of a Banach
space were studied in \cite{DG2,DEG} (cf \cite{GL1}). Using the techniques in question
we can regard
spaces such as $\Lambda$ (and likewise, $\Gr(p,A), \pol$, etc.)
as \emph{analytically} parametrized in terms of analytic maps
$D_0 \lra \Lambda$
to the underlying Banach space (of $\Lambda$), where $D_0 \subset \bC$
denotes the open unit disk. Thus taking $w \in D_0$ as a local parameter,
one can assign (operator) $\Lambda$-valued
functions $\zeta_{\lambda}(w)$ parametrizing $\lambda \in \Lambda$
(and likewise
for, e.g., $W \in \Gr(p,A)$). With this understood, we shall simply
write, as a convention,
$\zeta$ for $\zeta(w)$ and $z$ for $z = f(w)$, etc.,
in the following. We also take $\Hol(D_0,\Lambda)$ to denote the space
of holomorphic (analytic)
$\Lambda$-valued functions on $D_0$.

 Following \cite{Zelikin}, we  assume that
(commensurable) subspaces in $\Gr(p,A)$ are
isomorphic to those of $\Gr^*(p,A)$ and consider a smooth family
of subspaces $\cH(s) \in \Gr(p,A)$ parametrized by one real parameter $s$,
where $\cH(u) \cong \hp$ for $u > 0$, and $\cH(v) \cong \hm$ for
$v \leq 0$, so that
the pair $(\cH(u), \cH(v))$ defines a polarization of $H_{\A}$.
More specifically, consider
an ordering $s_2 < 0 <s_1 < s_3$, and a pair of polarizations
identified with points
$(\hp, \hm), (\kp, \km) \in \pol$:
\begin{equation}\label{pair-pol}
\begin{aligned}
(\cH_2, \cH_1) : &= (\cH(s_2), \cH(s_1)) \cong (\hp, \hm), \\
(\cH, \cH_3) : &= (\cH(0), \cH(s_3)) \cong (\kp, \km).
\end{aligned}
\end{equation}
For spaces such as $\Gr(p,A), \Lambda$ and $\pol$
(we recall that $\Lambda$ and $\pol$ are analytically diffeomorphic),
we define \emph{the operator cross-ratio} (`$\cross$') in terms
of projection ($A$-valued)
affine coordinates as
\begin{equation}\label{cross}
\cross(a,b;c,d) = (a - c)(a - b)^{-1}(b - d)(c -d)^{-1}.
\end{equation}
Let $z$ be such a $\Lambda$-valued variable in $\cH(s)$, and
letting $z_i$ denote variables with respect to
$\cH_i$ ($z$ corresponds to $\cH(0)$), we apply \eqref{cross} to the
polarizing pair in \eqref{pair-pol}
to obtain
\begin{equation}\label{cross-1}
\cross(\cH_2, \cH_1; \cH, \cH_3) = \cross(z_1,z_2;z,z_3)
= (z_1 - z)(z_1 - z_3)^{-1}(z_2 - z)(z_2 - z_3)^{-1},
\end{equation}
which is defined on projection ($A$-valued) analytic coordinates of $\pol$.
As shown in \cite{Zelikin}, this yields a
$\End(\gamma_{\Gr})$-valued $1$-cocycle $\{ \cross \}
\in H^1(\Gr(p,A), \End(\gamma_{\Gr}))$,
and hence under the pullback
\begin{equation}\label{pull-back-1}
\pi^*_{\Lambda}: H^1(\Gr(p,A), \End(\gamma_{\Gr}))
\lra H^1(\Lambda, \End(\gamma_{\Lambda})),
\end{equation}
we regard $\{\cross\}$ as also an $\End(\gamma_{\Lambda})$-valued
1-cocycle on $\Lambda$. Likewise the $\T$-function
can be viewed as a $G(pAp)$-valued $1$-cocycle in terms of the
transition function $t_{\Lambda}$ as
given by \eqref{T-FUNCTION-1}(see \cite[\S8]{DGP3} for details).
The operator cross-ratio is used by Zelikin
 \cite{Zelikin} to introduce an
operator analogous to the Schwarzian derivative  \cite[\S4]{Zelikin} .
 The key idea  is that, though operators do not
commute, one can take limits within the cross-ratio
along the real parameter $s$, one at a time
for $s_2$, $s_3$, and $s_1$ as in \eqref{pair-pol}, checking at every step an
asymptotic polarization consistency in the process.
We can apply Zelikin's result verbatim for  $z$-curves ($z\in\bC$)
in $\Lambda$, and write (with respect to the 1-dimensional parameter $s$):
\begin{equation}\label{schwarzian}
\Schw_{\Lambda}(z) = (z')^{-1} z''' - \frac{3}{2} ((z')^{-1} z'')^2.
\end{equation}

\begin{remark}\label{schw-deriv}
The Schwarzian derivative,
in the classical case of scalar-valued functions,
 arises naturally (cf. \cite{ZelikinBook},
where it is derived from the Wronskian for a basis
of solutions of a third-order differential equation obtained
by writing the invariance of the cross-ratio under
linear-fractional transformation, differentiating both sides, and
eliminating the three parameters of $\mathrm{PSL}(2)$)
and is in fact the only projectively
invariant 1-cocycle on $\mathrm{Diff}
(\mathbb{R}\mathbb{P}^1)$ \cite{OT1}.
The significance of Zelikin's definition
 \cite{Zelikin} rests partly on the fact
that, for matrix-valued deformations $z(s)$,
he was able  to show that the Schwarzian operator preserves
the operator cross-ratio
(for extensions of linear fractional transformations/cross-ratio to the
operator-valued
setting, along with applications, see e.g. \cite[Ch. 3]{Helton}).
\end{remark}

We now apply properties of this Schwarzian to our setting and proceed to define:
\begin{equation}\label{hol3}
\Hol^{(3)}(D_0,\Lambda) = \{ f \in \Hol(D_0,\Lambda):
\text{values of the derivatives $f^{(n)}$ commute for $n \leq 3$},
f'(z) \neq 0\}.
\end{equation}

Taking  $f \in \Hol^{(3)}(D_0,\Lambda)$ with $z=f(w)$ as before, we next
consider
\begin{equation}
(S_{\Lambda}f)(w,t) = \cross(f(w+ ta), f(w + tb); f(w + tc), f(w+ td)),
\end{equation}
Following \cite{Ahlfors} (cf. \cite{OT1}), we deduce formally
from the vector-valued case that, to second order,
\begin{equation}\label{ahlfors-1}
(S_{\Lambda}f)(w,t) = \cross(a,b;c,d)(1 + \frac{1}{6}(a -b)(c -d)
 (\Schw_{\Lambda}f)(w) t^2 + o(t^2)),
\end{equation}
confirming the Schwarzian $\Schw_{\Lambda}f$ to be
the infinitesimal version of the cross-ratio
that does not change in first order (cf. \cite{Zelikin}).

\begin{remark}\label{proj-remark}
In direct analogy with the classical case \cite{Gunning}
(see also \S\ref{proj-struct})
we may regard those $f \in \Hol^{(3)}(D_0,\Lambda)$ for
which $S_{\Lambda}f = 0$ as \emph{projective transformations}
and in particular, a coordinate system for which
$S_{\Lambda}(z) = 0$ could be taken to define a \emph{projective
structure on $\Lambda$}.
\end{remark}

Let us now recall the function $\T_{\lambda}$ from Definition \ref{def-T-2}.
In view of \eqref{tauop1}, and on setting $\Delta \T_{\lambda}(t)
= (1 + \frac{1}{6}(a -b)(c -d) (\Schw_{\Lambda}f)(w) t^2 + o(t^2))$,
we directly deduce from \eqref{ahlfors-1}
 on applying the $\T_{\lambda}$-function to \eqref{cross-1},
the relationship
\begin{equation}\label{ahlfors-2}
\T_{\lambda}(\cH_2, \cH_1;\cH, \cH_3) ~\Delta \T_{\lambda}(t)
 = S_{\Lambda} (\cH_2, \cH_1; \cH, \cH_3)(t),
\end{equation}
where we have implicitly used the pull-back of
the cross-ratio under $\pi_{\Lambda}$ in \eqref{pull-back-1}
above, along with \eqref{T-relation-1}.
In view of the formulas in \eqref{curv-2-form},
 we next consider the infinitesimal deformation of the
curvature $2$-form of $\nabla (\Det_{\Lambda})$ under an element of
$\Hol^{(3)}(D_0,\Lambda)$:

\begin{proposition}\label{2-forms}
Let $f \in \Hol^{(3)}(D_0,\Lambda)$. Then we have the following `
asymptotic' relationship under $f$ between $2$-forms in
\eqref{curv-2-form}:
\begin{equation}\label{omega-1}
\Omega(\Det_{\Lambda})  - \Omega(\Det_{f(\Lambda)})
= \frac{1}{2\pi \iota} \del_{+}
\del_{-} \log \vert \det (\Delta \T_{\lambda}(t)^{-1}) \vert.
\end{equation}
\end{proposition}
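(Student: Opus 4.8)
The plan is to relate the two curvature $2$-forms through their common origin as logarithmic potentials of $\tau$-functions, and then to exploit the multiplicative asymptotic relationship \eqref{ahlfors-2} between the $\T_\lambda$-function and the cross-ratio. First I would recall from \eqref{curv-2-form} that
\[
\Omega(\Det_{\Lambda}) = \frac{1}{2\pi\iota}\del_{+}\del_{-}\log\vert\tau_{\lambda}\vert,
\]
and that the same identity holds with $\Lambda$ replaced by the deformed space $f(\Lambda)$, yielding $\Omega(\Det_{f(\Lambda)}) = \frac{1}{2\pi\iota}\del_{+}\del_{-}\log\vert\tau_{f(\lambda)}\vert$. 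Subtracting, and using that $\del_{+}\del_{-}\log$ is additive on products, reduces the claim to identifying the ratio $\tau_{\lambda}/\tau_{f(\lambda)}$ (up to a factor annihilated by $\del_{+}\del_{-}\log\vert\cdot\vert$, such as a holomorphic-in-$w$, hence effectively constant-in-the-fibre, term) with $\det(\Delta\T_{\lambda}(t))$.

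Next I would pass from the $\tau$-functions to the predeterminants via Proposition \ref{Det-diagram}, which gives $\tau_{\lambda} = \Det~\T_{\lambda}$ (more precisely, after the trivialization by $\sigma_{\lambda}$ as in \eqref{tau-lambda-1}, so the trivializing section contributes only a term killed by $\del_{+}\del_{-}\log\vert\cdot\vert$). Thus it suffices to compare $\Det~\T_{\lambda}$ before and after applying $f$. Here I would invoke \eqref{ahlfors-2}: writing the cross-ratio realization of $\T_{\lambda}$ on the polarizing quadruple $(\cH_2,\cH_1;\cH,\cH_3)$ and applying $f\in\Hol^{(3)}(D_0,\Lambda)$ (whose defining commutativity condition \eqref{hol3} for derivatives up to order $3$ is exactly what makes $\Det$ multiplicative across the relevant factors and legitimizes the scalar Ahlfors-type expansion \eqref{ahlfors-1}), one has $S_{\Lambda}(\cH_2,\cH_1;\cH,\cH_3)(t) = \T_{\lambda}(\cH_2,\cH_1;\cH,\cH_3)\,\Delta\T_{\lambda}(t)$. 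Taking determinants and using that $\Det$ is a homomorphism on the commuting product gives $\tau_{f(\lambda)} = \tau_{\lambda}\cdot\det(\Delta\T_{\lambda}(t))$ to second order, equivalently $\tau_{\lambda}/\tau_{f(\lambda)} = \det(\Delta\T_{\lambda}(t))^{-1}$; substituting into the subtracted curvature formulas then yields \eqref{omega-1}.

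The main obstacle I anticipate is justifying that $\del_{+}\del_{-}\log\vert\cdot\vert$ genuinely kills the ``error'' and trivialization terms: the trivializing sections $\sigma_{\lambda}$ are holomorphic (constant) over $\Gamma_1^{\lambda}$, and the $o(t^2)$ remainder in \eqref{ahlfors-1}–\eqref{ahlfors-2} must be controlled so that it does not contaminate the second-order (hence the stated ``asymptotic'') identity — this is why the equality is stated only in the asymptotic/infinitesimal sense. A secondary technical point is checking that the split exterior derivative $d = \del_{+}+\del_{-}$ of \eqref{pmsplit}, which lives on $\Lambda$ via the diffeomorphism $\phi$ of \eqref{diffeo-1}, behaves compatibly under the map $f$ — i.e. that $\Omega(\Det_{f(\Lambda)})$ is computed with the pulled-back splitting — but this follows from naturality of the construction of \eqref{lambda-principal}–\eqref{lambda-universal} under analytic maps, together with the fact established in \S\ref{cross-ratio} that the cross-ratio $\{\cross\}$ is a genuine $\End(\gamma_{\Lambda})$-valued $1$-cocycle, so that its $f$-deformation again defines a determinant bundle with connection of the same type.
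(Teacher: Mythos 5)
Your argument is essentially the paper's own proof: both start from the logarithmic-potential formulas \eqref{curv-2-form} for $\Omega(\Det_{\Lambda})$ and $\Omega(\Det_{f(\Lambda)})$, apply $\det$ to the asymptotic identity \eqref{ahlfors-2}, identify $\det((S_{\Lambda}f)(w,t))$ with $\tau_{f(\Lambda)}$, and use additivity of $\del_{+}\del_{-}\log\vert\cdot\vert$ to isolate the $\det(\Delta\T_{\lambda}(t)^{-1})$ term. The extra remarks you make about the trivializing sections and the $o(t^2)$ remainder go beyond what the paper records but do not change the route.
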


\begin{proof}
 From \eqref{curv-2-form}, we have on applying `$\det$' to
\eqref{ahlfors-2}
the following:
\begin{equation}\label{ahlfors-3}
\begin{aligned}
\Omega(\Det_{\Lambda}) &= \frac{1}{2\pi \iota} \del_{+}
\del_{-} \log \vert \tau_{\lambda} \vert \\
&= \frac{1}{2\pi \iota} \del_{+}
\del_{-} \big[ \log \vert \det (\Delta \T_{\lambda}(t)^{-1}) \vert + \
\log \vert \det ((S_{\Lambda})f(w,t)) \vert \big].
\end{aligned}
\end{equation}
But by definition, $(S_{\Lambda})f(w,t) = f \circ \T_{\lambda}$, and so
 we have
\begin{equation}
\det((S_{\Lambda}f)(w,t)) = \det (f \circ \T_{\lambda}) = \tau_{f(\Lambda)}.
\end{equation}
Thus the right-hand side of \eqref{ahlfors-3} becomes
\begin{equation}
\frac{1}{2\pi \iota} \del_{+}
\del_{-} \big[ \log \vert \det (\Delta \T_{\lambda}(t)^{-1})
\vert + \log \vert  \tau_{f(\Lambda)} \vert  \big],
\end{equation}
from which the result follows in view of \eqref{curv-2-form} (observe
that essentially the same
applies to $\Omega(\Det_{\Gr})$ in \eqref{curv-2-form}).
\end{proof}

\begin{remark}
In view of his definition  of the operator Schwarzian derivative
 on one-dimensional submanifolds of the Grassmannian,
 Zelikin posits, in a speculative manner, that
the operator-valued KP deformations (whose Baker functions
satisfy Riccati's equation,
cf. also \cite{Segal}) might  be studied using the
Schwarzian (of
which he proved that
a quotient of solutions, which defines a projective
structure, also satisfies a Riccati equation); but, quote \cite[p. 51]{Zelikin}: ``Unfortunately,
the trajectories of [the Riccati] fields do not lie in
[the restricted Grassmannian], not even on arbitrarily small time intervals''.
Here we take a different route.  Raina \cite{Raina},
motivated by conformal-field theory, re-wrote Fay's trisecant
identity (the Riemann-surface version of the KP equation)
as a generalized cross-ratio; below (\S\ref{proj-struct}-\ref{k-dv})
we use that strategy, together with the operator
cross-ratio for the Riemann surface, to arrive at the operator KP equations.
\end{remark}



\subsection{Relationship with projective structures on $X$}\label{proj-struct}

We turn to the work in \cite{BR, Raina};
we apply Zelikin's Schwarzian operator (\S\ref{cross-ratio}),
extended to our setting, to the
description of the projective structure on a
compact Riemann surface $X$ given in \cite{BR}.
 We are ultimately  able (\S\ref{wick}) to  re-derive the KP hierarchy
with operator coefficients that was the motivation behind \cite{DGP2}.

Let $K_X$ denote the canonical line bundle of $X$. The space of all
projective structures on $X$
is an affine space for the complex
 vector space $H^0(X, K_X^2)$
of global holomorphic quadratic differentials, in which the classical
Schwarzian $\Schw_{X}$
determines a cocycle (see \cite[pp. 170--172]{Gunning}).

More explicitly, take a covering
$\{U_i\}_{i \in \mathcal I}$ for $X$ and let $\phi_i$ be a
holomorphic function on $\zeta_i\in U_i$, so that the transition
function $h_i$ for $H^0(X, K_X^2)$, satisfies (see e.g. \cite{BR,Gunning})
\begin{equation}\label{schw-1}
h_i = \Schw_{X}(\phi_i) \zeta_i.
\end{equation}
Observe that if $\phi'_i$ is another function
satisfying \eqref{schw-1}, then $\phi'_i(z_i) = \Xi \circ \phi_i(z_i)$
where $\Xi$ denotes a M\"obius transformation.

Referring back to \S\ref{cross-ratio},
as noted in \cite{Zelikin} (in the matrix-valued case),
the definition of cross-ratio and Schwarzian can be extended
to complex deformations in one parameter $s$.
In such a situation we denote the Schwarz operator by $\Schw_{\Lambda}(s)$.
We focus on the deformations taking place along the curve (the compact Riemann
surface) $X$ embedded in the $\Gr(p,A)$ by the Krichever map:
recall
\cite[Appendix \S A.4]{DGP2} that to define the Krichever map we need to
fix a local parameter at a point $\infty$, a
(generic) line bundle and a local trivialization; with these data,
we produce a point $W\in \Gr(p,A)$; the action of $\Gamma_1 = \Gamma_+(\A)$
on $W$ sweeps out $J_{\A}(X)$, so by the Abel map
we have (up to several choices) a holomorphic embedding
$X \hookrightarrow\Lambda$ (see Proposition \ref{proj-prop} below).
In analogy with the classical case \cite{Gunning}
(see also Remark \ref{proj-remark})
we  regard those $z$ for which $\Schw_{\Lambda}(z) = 0$ as projective
coordinates, thereby defining
a projective
structure on the embedded copy of $X$.

The following proposition summarizes certain analytic properties of $\Lambda$
(which like $\Gr(p,A)$ is modeled on a complex Banach space):
\begin{proposition}\label{lambda-analytic}
The space $\Lambda$ is an open and closed holomorphic (Banach)
submanifold of $P(A)$
which is a holomorphic (Banach) submanifold of $A$.
\end{proposition}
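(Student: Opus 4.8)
The statement decomposes into two independent claims about the chain $\Lambda \subset P(A) \subset A$: first that $P(A)$ is a holomorphic (Banach) submanifold of $A$, and second that $\Lambda = \Sim(p,A)$ is an \emph{open and closed} holomorphic submanifold of $P(A)$. I would treat each separately, since the first is a standard fact about idempotents in a Banach algebra and the second is where the real content lies.

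\textbf{Step 1: $P(A)$ is a holomorphic submanifold of $A$.} The set of idempotents $P(A) = \{p \in A : p^2 = p\}$ is the zero set of the holomorphic map $F: A \to A$, $F(x) = x^2 - x$. To see it is a submanifold one exhibits local holomorphic charts: fix $p_0 \in P(A)$ and, for $x$ near $p_0$, use the classical similarity trick --- the element $u(x) = p_0 x + (1-p_0)(1-x)$ lies in $G(A)$ for $x$ close to $p_0$ (being a small perturbation of $1$), depends holomorphically on $x$, and when $x \in P(A)$ one has $u(x)^{-1} x\, u(x) = p_0$, i.e. nearby idempotents are all similar to $p_0$ via a holomorphically varying conjugator. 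This realizes a neighborhood of $p_0$ in $P(A)$ as the image of an open set in the Banach space $(1-p_0)A p_0 \oplus p_0 A (1-p_0)$ (the ``off-diagonal'' complement), giving the holomorphic chart. I would cite \cite{DG2} (and the background in \cite[Appendix]{DGP2}) for the detailed verification that the transition maps between such charts are holomorphic, since this is exactly the construction underlying the analytic structure on $\Gr(p,A)$ already invoked in \S\ref{polarizations}.

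\textbf{Step 2: $\Lambda$ is open and closed in $P(A)$.} Here I use that $P(A)$ is \emph{locally} a single similarity class: the chart construction in Step 1 shows that every idempotent sufficiently close to $p_0$ is similar to $p_0$ via an inner automorphism, i.e. lies in $\Sim(p_0,A)$. Consequently the similarity relation on $P(A)$ has open equivalence classes --- each class is a union of chart-neighborhoods --- and therefore, being the complement of the union of the other (open) classes, each class is also closed. Applying this to the class $\Lambda = \Sim(p,A)$ gives that $\Lambda$ is clopen in $P(A)$. Being an open subset of the submanifold $P(A)$, it inherits the holomorphic submanifold structure, and the analytic structure on $\Lambda$ described just before \eqref{diffeo-1} is precisely this one.

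\textbf{Main obstacle.} The only genuinely delicate point is the verification in Step 1 that the charts patch holomorphically --- that conjugation $x \mapsto u(x)^{-1} x\, u(x)$ with $u$ as above, restricted to $P(A)$, is a local holomorphic diffeomorphism onto the affine off-diagonal subspace and that overlapping such parametrizations differ by a holomorphic map. This is where one must be careful about which Banach-space complement is used and about the invertibility range of $u(x)$; everything else (the clopen argument in Step 2) is then a soft topological consequence. Since this patching is exactly the content of the cited analytic-structure results for $\Gr(p,A)$ in \cite{DG2,DGP3}, I would quote it rather than reprove it, and the proof reduces to assembling Steps 1 and 2.
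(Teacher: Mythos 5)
Your proof is correct, and it is built on exactly the same algebraic gadget as the paper's: the conjugator $u(x)=p_0x+(1-p_0)(1-x)$ is the paper's $g(p_0,x)=p_0x+(1-p_0)(1-x)$, with $g(p_0,p_0)=1$ guaranteeing invertibility nearby. The packaging differs, though. You decompose the statement as ``$P(A)$ is a holomorphic submanifold of $A$'' plus ``$\Lambda$ is clopen in $P(A)$,'' building charts for all of $P(A)$ and then running the soft partition-into-open-classes argument. The paper instead cites the clopen statement and uses the identity $pg(p,q)=g(p,q)q$ (valid for \emph{any} idempotent $q$) to define a single holomorphic retraction $r(x)=g(p,x)^{-1}pg(p,x)$ of an open set $U\subset A$ onto $U\cap\Lambda$; since a holomorphic retract of an open subset of a Banach space is automatically a holomorphic submanifold, this gives $\Lambda\subset A$ in one stroke without first manufacturing charts for $P(A)$, and it simultaneously shows $P(A)\cap U\subset\Lambda$, which is precisely your openness-of-classes step. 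Your route makes the intermediate structure on $P(A)$ and the clopen argument explicit (the paper leaves the former to the references), at the cost of having to worry about chart compatibility; the retraction route avoids that delicacy entirely, which is the ``main obstacle'' you flagged. One small slip: with your $u$, the correct identity is $u(x)^{-1}p_0\,u(x)=x$ (equivalently $u(x)\,x\,u(x)^{-1}=p_0$), not $u(x)^{-1}x\,u(x)=p_0$; the direction of the conjugator is reversed, but this does not affect the conclusion that nearby idempotents are similar to $p_0$.
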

\begin{proof}
That $\Lambda$ is an open and closed holomorphic submanifold of $P(A)$
has been shown to be the case in \cite{DG2,DGP3}(cf. \cite{Raeburn}).
In fact, $\Lambda$ is  locally a holomorphic retract of $A$, as seen as
follows. For $x,y \in A$, we define $g(x,y) = xy + (1-x)(1-y)$,
noting that $g(p,p) =1$ and therefore invertible for all $x,y$
in some open subset $U$ of $A$ containing $p$.
We note then for $q \in \Lambda$, that $pg(p,q) = g(p,q)q$,
so for $q \in \Lambda \cap U$, we have $g(p,q)^{-1}pg(p,q) = q$.
Thus $r(x)= g(p,x)^{-1}pg(p,x)$ is a holomorphic retraction of $U$ onto
its overlap with $\Lambda$, on shrinking $U$ further if necessary.
This also shows that $\Lambda$ is a holomorphic submanifold of $A$.
\end{proof}
We summarize the above facts relating to holomorphic/projective structures
in the following proposition which
compares, via $\Schw_{\Lambda}$,
the complex structure induced from $\Lambda$
with the one intrinsic to $X$.


\begin{proposition}\label{proj-prop}
Let $\eta: X \hookrightarrow \Lambda$ be a holomorphic
embedding with respect to the
natural (complex) analytic structure of $\Lambda$. Then
the Schwarzian operator intrinsic to $X$ and the one induced from $\Lambda$,
with respect to the holomorphic
deformation along the embedded curve, correspond.
\end{proposition}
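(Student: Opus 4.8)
The plan is to pass to local holomorphic coordinates on $X$ and show that, along the embedded curve, Zelikin's operator Schwarzian $\Schw_{\Lambda}$ of \eqref{schwarzian} degenerates to the classical scalar Schwarzian of the parametrizing function; then the restriction to $\eta(X)$ of the $\End(\gamma_{\Lambda})$-valued Schwarzian cocycle of \S\ref{cross-ratio} coincides with the $H^0(X,K_X^2)$-valued cocycle \eqref{schw-1}, which is exactly the asserted correspondence.

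First I would fix a covering $\{U_i\}_{i\in\mathcal I}$ of $X$ with holomorphic coordinates $\zeta_i$ and transition functions $\phi_{ij}=\zeta_i\circ\zeta_j^{-1}$, and compose with $\eta$ to obtain for each $i$ a holomorphic $A$-valued map $f_i=\eta\circ\zeta_i^{-1}$ on a disk $D_0$, parametrizing the corresponding chart's worth of the embedded curve inside $\Lambda\subset P(A)\subset A$; here Proposition \ref{lambda-analytic} is what makes the ambient Banach-algebra derivatives of $f_i$ meaningful, since $\Lambda$ is a holomorphic submanifold of $A$. On an overlap one has $f_i=f_j\circ\phi_{ji}$.

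The crucial structural input is that $f_i$ takes values in a commutative subalgebra of $A$. Indeed, as recalled before Proposition \ref{lambda-analytic}, the embedding $\eta$ arises (up to the indicated choices) from the Krichever map followed by the $\Gamma_1=\Gamma_+(\A)$-action sweeping out $J_{\A}(X)$ and the Abel map, and the Krichever construction factors through the monomorphisms $\bA\otimes\A\lra B_W$ of \cite[Appendix A.4]{DGP2}, where $\bA$ is the commutative affine coordinate ring of $X\setminus\{\infty\}$. Hence the jets $f_i',f_i'',f_i'''$ all lie in a single commutative subalgebra and in particular commute, so that locally $\eta\in\Hol^{(3)}(D_0,\Lambda)$ in the sense of \eqref{hol3} (in fact all derivatives commute). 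With commuting derivatives the two terms in \eqref{schwarzian} may be rearranged exactly as in the scalar case, so $\Schw_{\Lambda}(f_i)$ reduces to the classical Schwarzian $\Schw_X(f_i)\otimes 1_{\A}$, i.e. it takes values in the scalars $\bC\cdot 1$ along the embedded curve. I would then finish by invoking the classical chain rule for the Schwarzian, $\Schw_{\Lambda}(f_i)=\big(\Schw_{\Lambda}(f_j)\circ\phi_{ji}\big)(\phi_{ji}')^2+\Schw_X(\phi_{ji})$, which is precisely the transformation rule \eqref{schw-1} of a cocycle in $H^0(X,K_X^2)$; hence $\Schw_{\Lambda}(z)=0$ characterizes the same (projective) coordinates on $\eta(X)$ as the intrinsic condition on $X$, and the two Schwarzian cocycles agree. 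By Remark \ref{proj-remark} this says the projective structure on the embedded copy of $X$ defined via $\Schw_{\Lambda}$ is the one intrinsic to $X$, giving the correspondence.

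The main obstacle is precisely the commutativity step: one must ensure that an entire chart's worth of the embedded curve, together with its jets up to order three, lands in one commutative subalgebra of $A$, so that the operator Schwarzian truly collapses to the scalar formula. This is where the Burchnall--Chaundy commutativity built into the Krichever data of \cite[Appendix A.4]{DGP2} (and used throughout Part I) is indispensable; without it $\Schw_{\Lambda}$ would satisfy only the weaker, genuinely operator-valued cocycle identity of \cite{Zelikin}, and the identification with the classical $H^0(X,K_X^2)$-cocycle would break down.
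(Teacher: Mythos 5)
Your overall strategy---localize, reduce the operator Schwarzian to the classical one along the embedded curve, and match cocycles---is a reasonable reading of what the proposition asserts, and in fact the paper offers no displayed proof at all: the statement is presented as a summary of the preceding discussion (Proposition \ref{lambda-analytic} for the holomorphic structure of $\Lambda$, the extension of Zelikin's $\Schw_{\Lambda}$ to one-parameter complex deformations along the Krichever-embedded curve, and the formally deduced chain rule $\Schw_{\Lambda}(h\circ f)=(\Schw_{\Lambda}(h)\circ f)(f')^2+\Schw_{\Lambda}(f)$, which is the same transformation law as \eqref{schw-1}). So your cocycle-matching endgame is close to the paper's intended argument.

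The genuine gap is the step you yourself flag as the main obstacle: the claim that a chart's worth of $\eta(X)$, together with its jets up to order three, lies in a single commutative subalgebra of $A$ ``because'' of Burchnall--Chaundy commutativity. This does not follow. The commutative ring $\bA\otimes\A$ of \cite[Appendix A.4]{DGP2} embeds as operators preserving a \emph{fixed} $W\in\Gr(p,A)$; the embedded curve, by contrast, consists of the \emph{distinct} subspaces $gW$ (equivalently the conjugated idempotents $g\lambda g^{-1}$) swept out by the $\Gamma_1$-action composed with the Abel map. Along a one-parameter subgroup $g_s$ with $\xi=g_s'g_s^{-1}$ the derivatives of $f(s)=g_s\lambda g_s^{-1}$ are the iterated commutators $[\xi,\lambda]$, $[\xi,[\xi,\lambda]]$, $[\xi,[\xi,[\xi,\lambda]]]$, and there is no reason for these to commute with one another; commutativity of $\Gamma_1$, or of $\bA$, gives nothing here. (Note also that $f$ cannot commute with $f'$ for a nonconstant curve of idempotents, since $\lambda f'\lambda=0$.) Hence you cannot conclude that $\eta$ lands in $\Hol^{(3)}(D_0,\Lambda)$ chartwise, and the collapse of $\Schw_{\Lambda}$ to the scalar Schwarzian tensored with $1_{\A}$ is unjustified. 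The paper avoids this by never claiming such a collapse: it compares the two Schwarzians only through the shared chain-rule/cocycle identity and the $\End(\gamma_{\Lambda})$-valued cross-ratio class, which is precisely why it goes on to speak of an \emph{operator-valued} projective structure on $X$ rather than the classical one. To keep your route you would need either to actually prove the commutativity hypothesis for the specific Krichever embedding, or to weaken the asserted ``correspondence'' to the level of transformation laws, as the paper does.
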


Returning to the cross-ratio class $\{\cross\}
\in H^1(\Lambda, \End(\gamma_{\Lambda}))$ in \S\ref{cross-ratio}
leading to the Schwarzian $\Schw_{\Lambda}$, we see that for functions
$f,h \in \Hol^{(3)}(D_0,\Lambda)$, it is formally
deduced from the classical case that
\begin{equation}
\Schw_{\Lambda}(h \circ f) = (\Schw_{\Lambda}(h)
\circ f)(f')^2 + \Schw_{\Lambda}(f),
\end{equation}
(see e.g. \cite{Ahlfors,Osgood,OT1}). Here the first right-hand summand is the
action of $f$ on a quadratic differential
\begin{equation}
(u \circ f)(z) = u(f(z) \cdot f'(z))^2,
\end{equation}
in terms of the $z$-coordinate above, in turn leading to a transition
 function for the vector bundle
$K^2_{\Lambda}$ in an analogous way to the classical situation.
 We thus have arrived at the novel concept of an \emph{operator-valued
projective structure} induced on a
one-dimensional complex submanifold from a space such as $\Lambda$.

\begin{remark}
It would be interesting to apply the
operator Schwarzian derivative
to compare the deformations
of $X$ inside $\Lambda$ with the deformations of $X$ parametrized by
$H^0(X,K^2_X)$, especially since the former should be
unobstructed (Krichever's map can be applied to any Riemann surface,
and locally it should be possible to make consistent
choices of a defining quintuple, cf. \cite{SW}).
However, the tangent bundle to $\Lambda$,
whose first cohomology gives the deformations, is
of infinite rank, and in order to define a canonical line bundle
over $\Lambda$ requires different techniques (cf \cite{BG}), as
 $\Lambda$ is an infinite-dimensional
Banach manifold which is a holomorphic submanifold of $A$, thus in
general, since we are in infinite dimensions, the top
exterior power is not formed in the usual way. However, in our
specific situation, where  $A$ is the restricted algebra, we have a group
transforming the restricted frames in $V(p,A)$
(``admissible bases'' in \cite[\S3]{SW})
on which the determinant is defined, and a central extension
of it ($\mathcal{E}$ in \cite[\S3]{SW}) where
the function $g$  giving the retraction
in Proposition \ref{lambda-analytic} takes values, so
that coordinate transformations in effect have derivatives which have
values in that central extension
($\mathcal{E}$) as well.
With this in hand,  we would need to restrict the
line bundle thus obtained to $X$
and see if each projective structure of $X$ can
be extended away from it, not only along the one-dimensional
deformations controlled by the Schwarzian derivative.
\end{remark}

\subsection{The Riemann theta function and Wick's theorem}\label{wick}

Our next main observation concerns (generalized) projective structures on
$X$  using a `correlation function'
approach motivated by {Wick's theorem} and a (generalized) cross-ratio
(see \cite{BR,Raina}). The point of this subsection is to
implement Raina's rendition of Fay's trisecant identity in terms
of the cross-ratio we developed
(\S\ref{cross-ratio}), and obtain the KP hierarchy in
\S\ref{wick} below, in our extension-group model.
Here we utilize
the Burchnall-Chaundy C*-algebra $\uA$ and the extension
group $\Ext(\uA)$ from \cite[\S4]{DGP2}.

\begin{theorem}\label{main-2}

For genus $g_X \geq 2$,
the action of the group $\Gamma_1 =\Gamma_{+}(\A)$ on $\Ext(\uA)$
corresponds to translating the theta function of $X$ on the Jacobian.
\end{theorem}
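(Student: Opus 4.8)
The plan is to identify the three objects appearing in the statement---$\Ext(\uA)$, the action of $\Gamma_1 = \Gamma_+(\A)$, and the Jacobian $J_\A(X)$ with its theta function---and then trace the dictionary built up in Part I \cite{DGP2} and in \S\ref{poincare} above to reduce the claim to the classical Segal--Wilson statement that the $\Gamma_+$-flow on $\Gr$ corresponds to linear motion on the Jacobian. First I would recall from \cite[\S4]{DGP2} the identification of $\Ext(\uA)$ with (a $C(Y)$-family of) the Jacobian $J_\A(X)$ of the spectral curve $X$: the Burchnall--Chaundy C*-algebra $\uA$ is the coordinate ring of $X$ (with its $\A$-coefficients), and classes in $\Ext(\uA)$ correspond, via the boundary map in the six-term exact sequence, to line bundles of the appropriate degree on $X$, i.e. to points of $\mathrm{Pic}(X)\otimes\A$. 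Under the Krichever map this is precisely the orbit $J_\A(X)$ swept out by $\Gamma_1$ acting on a fixed $W\in\Gr(p,A)$, as already noted in \S\ref{proj-struct} (``the action of $\Gamma_1$ on $W$ sweeps out $J_\A(X)$'').

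Next I would make the $\Gamma_1$-action explicit on both sides. On the extension side, multiplication by $g\in\Gamma_+(\A)$ twists the module structure, hence tensors the associated line bundle $L$ on $X$ by the (degree-zero) bundle determined by $g$; on the Jacobian side this is translation by the Abel--Jacobi image of the divisor class of that bundle. The content of the theorem is then that this translation is intertwined with translation of the Riemann theta function $\theta$: since $\tau_W$ (resp. $\tau_\lambda$) is, by the Segal--Wilson/Krichever correspondence recalled through \eqref{tauop3} and \S\ref{poincare}, expressible as $\theta$ evaluated at a point of $J_\A(X)$ moving linearly under $\Gamma_1$, the group law on $J_\A(X)$ transported to $\Ext(\uA)$ via the Part I identification becomes exactly $\theta(z)\mapsto\theta(z+v_g)$ for the period vector $v_g$ attached to $g$. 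Concretely I would write $\tau_W(g\cdot h)$ using \eqref{poincare-2} and the cocycle property of the Poincar\'e bundle $\mathfrak B_W$ over $\Gamma_1\times\Gamma_1$, and compare with the quasi-periodicity of $\theta$; the commutative diagram \eqref{Gamma-times} guarantees this is compatible with the group multiplication, so it suffices to check it on one-parameter subgroups, e.g. on the generators $q_\zeta$ of \cite[\S4.4]{DGP2}, where $\tau_W(g\cdot q_\zeta)/\tau_W(g) = \psi_W(g,\zeta)$ is the Baker function---whose theta-function expression (Its--Matveev-type formula, as in \cite{SW}) displays precisely the sought translation.

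The main obstacle, and where the argument needs care rather than routine bookkeeping, is the coefficient ring: everything must be done $C(Y)$-linearly, so that ``the Jacobian'' means the family $Y\ni y\mapsto J(X_y)$ of Jacobians of the fibers and ``the theta function'' is the corresponding $\A$-valued (in fact $\bC\otimes 1_\A$-valued, cf.\ \eqref{tauop3}) section. One must verify that the boundary map $\mathrm{Ext}(\uA)\to K_0(\cdot)$ from Part I, restricted to the relevant component, is an isomorphism of $\A$-modules onto $J_\A(X)$ compatibly with the $\Gamma_1$-action, and that the genus hypothesis $g_X\ge 2$ is what makes the Krichever data generic (a nonspecial line bundle of degree $g_X-1$ exists, so $\tau_W$ is a genuine theta value and not identically zero on a stratum). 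Granting the Part I identification and the transversality discussion preceding \eqref{curv-2-form}, the remaining step is then the classical computation, carried out fiberwise over $Y$, that the $\Gamma_+$-flow is linear on the Jacobian --- which is exactly Segal--Wilson \cite{SW} tensored with $\A$, and this yields the correspondence asserted.
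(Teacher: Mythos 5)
Your proposal is correct and follows essentially the same route as the paper: both reduce the claim to the Part I identification of $\Ext(\uA)$ with the Jacobian $J_{\A}(X)$ (via the surjection $\Gamma_1 \to J_{\A}(X)$ and Theorem 4.5 of Part I) combined with the classical Segal--Wilson proportionality of $\tau_\lambda$ with the ($\bC\otimes 1_{\A}$-valued) theta function, so that the $\Gamma_1$-flow becomes theta translation. You actually supply more of the mechanism (quasi-periodicity, the check on the generators $q_\zeta$ via the Baker function) than the paper, whose proof is a terse commutative diagram asserting the proportionality, followed by a discussion of Wick's theorem and the Szeg\H{o} kernel that is not needed for the statement as phrased.
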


\begin{proof}
Let $\mathfrak{L}_{\theta} \lra J_{\A}(X)$ be the holomorphic
 line bundle whose sections are theta
functions $\theta[\xi](z)$ of characteristic $\xi$ (see e.g. \cite{Fay,GH})
taken as $\bC \otimes 1_{\A}$-valued.
Similar to before, consider a $\Lambda$-parametrization of the
surjective homomorphism of
\cite[A(19)]{DGP2}, giving a commutative diagram
\begin{equation}
\begin{CD}
\Gamma_1 \times \Lambda @> \tau_{\lambda}
>> \bC \otimes 1_{\A}
\\ @VVV   @VVV\\
J_{\A}(X) \times \Lambda @>
\theta_{\lambda}[\xi]
>> \bC \otimes 1_{\A}
\end{CD}
\end{equation}
The commutativity of this diagram reveals
that $\tau_{\lambda}$ is `proportional' to
$\theta_{\lambda}[\xi]$ following the classical case
 (cf. \cite{Raina,SW}), and hence from
\cite[Theorem 4.5]{DGP2}, $\Ext(\uA)$ parametrizes a family
of (translations of the)
theta function(s) via extensions by the compact operators.

Next, consider an (operator)-valued spinor
field $\Vec{\psi}$ on $X$,
and points $a,b \in X$ suitably chosen to lie in the same coordinate patch.
In \cite{Raina} it is shown that the Fay trisecant identity
(see \eqref{tris-1}) is equivalent to \emph{Wick's theorem},
 which in terms of a left-hand-side `correlation function' below
(see \cite{BR,Raina}),
 can be expressed in the form
\begin{equation}\label{tris2}
\langle \Vec{\psi}^*(b_1) \Vec{\psi}(a_1) \Vec{\psi}^*(b_2)
\Vec{\psi}(a_2) \rangle =
\det \bmatrix S_{\a}(b_1, a_1) & S_{\a}(b_1, a_2) \\ S_{\a}(b_2, a_1) &
S_{\a}(b_2, a_2)
\endbmatrix
\end{equation}
where $S_{\a}$ is the Szeg\"{o} kernel of a theta prime-form \cite{Fay}.
Moreover, in view of \cite[\S5]{BR} the `correlation function' of
\eqref{tris2}
defines a projective connection and hence a projective structure
\cite{Gunning}).
\end{proof}

\begin{remark}
In terms of the Cauchy kernels of flat vector bundles of arbitrary
rank on $X$ (for $g_X \geq 1$), explicit
formulas in \cite{Ball3} interpolate homomorphisms of these bundles
 leading to generalizations of the Fay
trisecant identity. We note that flat rank-2 bundles over $X$
(for $g_X \geq 2$) have been related \cite{BR}
to quadratic differentials (in both cases
the moduli spaces have the same dimension, $3g-3$).
It is possible this may lead to further
applications of operator
 projective structures on $X$
as well.
\end{remark}

\subsection{The KP tau-function and trisecant identity}\label{k-dv}

We briefly recall the concept of the KP-hierarchy starting
 from \cite[Appendix \S A4]{DGP2}.
Consider a formal pseudodifferential operator of the form
\begin{equation}
L= \del + a_0\del^{-1} + a_1\del^{-2} + \cdots,
\end{equation}
where as in \cite[\S A4]{DGP2} we take the
coefficients $a_i = a_i(t_1, t_2, \ldots)$
 to be $\A$-valued functions.
Next, let us set $P^{(k)} = (L^k)_{+}$. Then for each $k \in \mathbb{N}$,
 the \emph{KP hierarchy} (see e.g. \cite{Krich1,Sato,Segal,SW}) consists
of partial differential equations of the type
\begin{equation}
\frac{\del L}{\del t_k} = [P^{(k)}, L],
\end{equation}
according to which
\begin{equation}
\frac{\del}{\del t_k}P^{(\ell)} - \frac{\del}{\del t_{\ell}}P^{(k)} +
 [P^{(\ell)}, P^{(k)}] = 0.
\end{equation}
Since we are using $\A$-valued coefficients, we choose to denote this
hierarchy by $\KP(\A)$. Once more we apply the extension
group $\Ext(\uA)$ and establish the following for the
$\KP(\A)$ flows with respect to the $\Gamma_1$-action:

\begin{theorem}\label{main-1}
The $\KP(\A)$ flows evolve on
the group $\Ext(\uA)$ via extensions by compact operators.
\end{theorem}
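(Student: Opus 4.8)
The plan is to assemble the statement from three ingredients already in place: the Sato--Segal--Wilson realization of the $\KP(\A)$ flows as the orbit of the $\Gamma_1 = \Gamma_{+}(\A)$-action, the dictionary of Theorem~\ref{main-2} between that action and translation of the theta function on $J_{\A}(X)$, and the identification from Part~I (\cite[Theorem~4.5]{DGP2}) of such translations with motion through $\Ext(\uA)$ by extensions by compact operators.

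First I would recall the operator-coefficient Segal--Wilson picture. Given the point $W \in \Gr(p,A)$ produced by the Krichever map, write $g(\mathbf t) = \exp\big(\sum_{k\ge 1} t_k z^{k}\big) \in \Gamma_1$; the assignment $\mathbf t \mapsto g(\mathbf t)\cdot W$ is the $\KP(\A)$ flow, and the formal pseudodifferential operator $L$ is reconstructed from the Baker function $\psi_W\big(g(\mathbf t),\zeta\big)$. By \eqref{T-lambda-3} (and its $\Gr(p,A)$-analogue) this Baker function is the ratio $\tau_W\big(g(\mathbf t)\cdot q_\zeta\big)\,\tau_W\big(g(\mathbf t)\big)^{-1}$, so the whole hierarchy is encoded in the single $\A$-valued function $\tau_W$ (equivalently $\tau_\lambda = \Det\,\T_\lambda$, suitably normalized, cf. Proposition~\ref{Det-diagram}) restricted to the $\Gamma_1$-orbit, and the $\KP(\A)$ equations $\del_{t_k} L = [P^{(k)},L]$ are the bilinear Hirota relations satisfied by that $\tau$-function. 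Via Raina's argument recalled in \S\ref{wick}, these bilinear relations are exactly Fay's trisecant identity \eqref{tris-1} written, as in \eqref{tris2}, through the operator cross-ratio of \S\ref{cross-ratio}.

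Next I would invoke Theorem~\ref{main-2}: for $g_X \ge 2$ the $\Gamma_1$-action on $\Ext(\uA)$ is translation of $\theta[\xi]$ on $J_{\A}(X)$, and the commutative square there exhibits $\tau_\lambda$ proportional to $\theta_\lambda[\xi]$. Since the Krichever/Abel construction carries the $\Gamma_1$-orbit in $\Gr(p,A)$ to a straight-line flow in $J_{\A}(X)$, the $\KP(\A)$ flow through $W$ is carried isomorphically to a linear flow on $J_{\A}(X)$ and thence, by \cite[Theorem~4.5]{DGP2}, to a flow through $\Ext(\uA)$, each point of which is an extension of $\uA$ by $\K(H_{\A})$. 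It remains to see the flow is well defined in this presentation: it is parametrized, for all but a divisor of times, by the dense transversality subset $\Gamma_1^W$ (resp.\ $\Gamma_1^\lambda$) on which $\tau_W$ is a unit, and off that divisor the block decomposition \eqref{tauop2} keeps the off-diagonal entries $b,c$ compact, so the extension class stays in $\Ext(\uA)$. Stringing these identifications together realizes the $\KP(\A)$ evolution as motion inside $\Ext(\uA)$.

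The step I expect to be the main obstacle is checking that the group structure on $\Ext(\uA)$ is \emph{compatible with the differential structure} of $\KP$: that translating an extension along the one-parameter subgroups $\{g(t_k z^{k})\}$ reproduces the vector fields $\del/\del t_k$ of the hierarchy, and not merely the abstract orbit. This amounts to matching the tangent map of the $\Gamma_1$-action (through the diagram \eqref{Gamma-times} and the pulled-back section $\wti{S}'_{\lambda}$) with the commutator flow $\del_{t_k} L = [P^{(k)},L]$; the computation closes because $\tau_\lambda$ is the common pre-potential for both sides --- its second logarithmic derivatives give the $\KP(\A)$ coefficients by the usual Sato formulas, and by \eqref{curv-2-form} the same $\tau_\lambda$ governs the determinant line bundle transported over $\Ext(\uA)$. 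One then verifies that the trisecant identity \eqref{tris2}, now read as an identity of $\A$-valued correlation functions, is preserved under the translations of Theorem~\ref{main-2}, which is precisely the assertion that the flow remains in the solution variety of $\KP(\A)$.
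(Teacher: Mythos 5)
Your proposal is correct and follows essentially the same route as the paper: both arguments rest on (i) the equivalence of the Fay trisecant identity \eqref{tris-1} with the $\KP(\A)$ hierarchy and (ii) the identification of $\Ext(\uA)$ with $\Gamma_1$ via the map $\Upsilon_{\A}$ of \cite[Theorem 4.5]{DGP2}, which transports the family of $\tau$-functions (hence the trisecant identity, hence the flows) onto the extension group. The paper's own proof is more direct --- it does not pass through Theorem \ref{main-2} and the theta function, nor does it address the transversality and tangent-map compatibility points you raise --- but the mechanism is the same.
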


\begin{proof}
Recalling the $z$-coordinate of \S\ref{cross-ratio}, we set
 $[z] = (z, z^2/2, z^3/3, \ldots)$.
Then from the operator-valued  $\tau$-function $\tau_{\lambda}:
\Gamma_1 \lra \bC \otimes 1_{\A}$
in \S\ref{poincare-1}
(cf. \cite{DGP2}), it follows that
(cf. \cite{SW,Sato})
\begin{equation}\label{kdv-tau}
\tau_{\lambda}(t + [z]) = \tau_{\lambda}(t_1 + z, t_2 + z^2/2, t_3 + z^3/3,
\ldots),
\end{equation}
that satisfies $\tau_{\lambda}(t + [-z]) = \tau_{\lambda}(t - [z])$.
Further, on setting $C(z_i, z_j, z_k, z_{\ell}) =
(z_i - z_j)(z_k - z_{\ell})$ in $\A$-valued variables, for
$0 \leq i,j,k, \ell \leq 3$, we formally deduce from references
\cite{Fay,Miwa} the
\emph{Fay trisecant identity}:
\begin{equation}\label{tris-1}
\begin{matrix}
C(z_0, z_1, z_2, z_3) \tau_{\lambda}(t + [z_0] +[z_1]) \tau_{\lambda}
 (t + [z_2] +[z_3]) +\\
C(z_0, z_2, z_3, z_1) \tau_{\lambda} (t + [z_0] +[z_2]) \tau_{\lambda}
 (t + [z_3] +[z_1]) + \\
C(z_0, z_3, z_1, z_2) \tau_{\lambda}(t + [z_0] +[z_3]) \tau_{\lambda}
(t + [z_1] +[z_2]) = &0.
\end{matrix}
\end{equation}
from the analogous identity for $\tau_W$ and then using \eqref{tau-lambda}.

 From \cite[Theorem 4.5]{DGP2}, elements of $\Ext(\uA)$, extensions
by the Burchnall-Chaundy C*-algebra $\uA$ of the compact operators,
lead to the map
\begin{equation}
\Upsilon^{-1}_{\A}: \Ext(\uA) \lra \Gamma_1
\end{equation}
(the inverse of the map $\Upsilon_{\A}$ as in the proof of \cite[Theorem 4.5,
 (4.20)]{DGP2}),
which yields a family of $\tau$-functions, one for each such extension.
Hence, in each case a corresponding trisecant identity as in \eqref{tris-1}
follows. But \eqref{tris-1}
has been shown to be equivalent to the KP-hierarchy (see e.g. \cite{Miwa}).
Hence we conclude that
the KP-hierarchy, implemented by the $\Gamma_1$-action,
flows on these extensions by compact operators as derived from $\Ext(\uA)$.
\end{proof}




\end{document}